\newtheorem{thm}{Theorem}[section]
\newtheorem{lemma}{Lemma}[section]
\newtheorem{cor}{Corollary}[section]
\newtheorem{pro}{Property}[section]
\newtheorem{rem}{Remark}[section]
\newtheorem{definition}{Definition}[section]
\newtheorem{case}{Case}[section]
\newcommand{\ex}{\mathrm{ex}}
\title{Planar Tur\'an number of two adjacent cycles}
\author{Xinzhe Song\thanks{Academy of Mathematics and Systems Science, Chinese Academy of Sciences, Beijing, China, and University of Chinese Academy of Sciences, Beijing, China.}
\and Guiying Yan\footnotemark[1]
\and Qiang Zhou\footnotemark[1]
}
\date{}
\begin{document}

\maketitle

\begin{abstract}
The planar Tur\'an number of $H$, denoted by $\ex_{\mathcal{P}}(n,H)$, is the maximum number of edges in an $n$-vertex $H$-free planar graph. The planar Tur\'an number of $k(k\geq 3)$ vertex-disjoint union of cycles is the trivial value $3n-6$. We determine the planar Tur\'an number of $C_{3}\text{-}C_{3}$ and $C_{3}\text{-}C_{4}$, where $C_{k}\text{-}C_{\ell}$ denotes the graph consisting of two disjoint cycles $C_k$ with an edge connecting them.


\textbf{Keywords:} Planar Tur\'an number, Adjacent cycles, Extremal graphs
\end{abstract}

\maketitle
\section{Introduction}

One of the most classical problems in extremal graph theory called Tur\'an-type problem, is counting the maximum number of edges in a hereditary family of graphs. We use $\ex(n,H)$ to denote the maximum number of edges in an $n$-vertex $H$-free graph. In 1941, Tur\'an~\cite{turan} gave the exact value of $\ex(n,K_{r})$ and characterized the extremal graph. Later in 1946, Erd\H{o}s and Stone~\cite{erdos1946} generalized this result asymtotically by proving $\ex(n,H)=(1-\frac{1}{\chi(H)-1}+o(1))\binom{n}{2}$ for an arbitrary graph $H$, where $\chi(H)$ denotes the chromatic number of $H$.

All graphs considered in this paper are simple and planar. We denote a simple graph by $G=(V(G),E(G))$ where $V(G)$ is the set of vertices and $E(G)$ is the set of edges. Let $v(G)$ and $e(G)$ denote the number of vertices and edges in $G$, respectively. For any subset $S\subseteq V(G)$, the subgraph induced by $S$ is denoted by $G[S]$. We denote by $G\backslash S$ the subgraph induced by $V(G)\backslash S$. If $S=\{v\}$, then we simply write $G\backslash v$. Given two graphs $G$ and $H$, $G$ is $H$-\emph{free} if it contains no subgraph isomorphic to $H$. We use $G \vee H$ to denote the graph obtained from $G$ and $H$ by joining all edges between $V(G)$ and $V(H)$. Given two subgraphs $H_1$ and $H_2$ in a planar graph $G$, we use $H_1\text{-}H_2$ to denote two vertex-disjoint graphs $H_1$ and $H_2$ with an edge connecting them. For more undefined notation, please refer to~\cite{graph-theory}.

In 2016, Dowden~\cite{dowden2016} initiated the study of Tur\'an-type problems when host graphs are planar graphs. We use $\ex_{\mathcal{P}}(n,H)$ to denote the maximum number of edges in an $n$-vertex $H$-free planar graph. Dowden studied the planar Tur\'an numbers of $C_{4}$ and $C_{5}$, they proved that $\ex_{\mathcal{P}}(n,C_4) \leq \frac{15(n-2)}{7}$ and $\ex_{\mathcal{P}}(n,C_5) \leq \frac{12n-33}{5}$, where $C_{k}$ is a cycle with $k$ vertices. Ghosh, Gy\H{o}ri, Martin, Paulos and Xiao~\cite{ghosh2022c6} gave the exact value for $C_{6}$, they proved that $\ex_{\mathcal{P}}(n,C_6) \leq \frac{5n-14}{2}$. Shi, Walsh and Yu~\cite{shi2025planar}, Gy\H{o}ri, Li and Zhou~\cite{győri2023c7} gave the exact value for $C_{7}$, they proved that $\ex_{\mathcal{P}}(n,C_7) \leq \frac{18n-48}{7}$, independently. Until now, the exact planar Tur\'an number of $C_{k}$ is still unknown for $k\geq 8$.

Cranston, Lidick\'{y}, Liu and Shantanam~\cite{daniel2022counterexample} first gave the lower bound for general cycles as $\ex_{\mathcal{P}}(n,C_k) > \frac{3(k-1)}{k}n-\frac{6(k+1)}{k}$ for $k\geq 11$ and sufficiently large $n$. Lan and Song~\cite{lan2022improved} improved the lower bound to $\ex_{\mathcal{P}}(n,C_{k}) > (3-\frac{3-\frac{2}{k-1}}{k-6+\lfloor \frac{k-1}{2}\rfloor})n+c_{k}$ for $n\geq k\geq 11$, where $c_{k}$ is a constant. Recently, Gy\H{o}ri, Varga and Zhu~\cite{gyHori2024new} improved the lower bound by giving a new construction to show that $\ex_{\mathcal{P}}(n,C_{k}) \geq 3n-6-\frac{6\cdot3^{\log_23}n}{k^{\log_23}}$ for $k\geq 7$ and sufficiently large $n$. Shi, Walsh and Yu~\cite{shi2025dense} gave an upper bound and proved that $\ex_{\mathcal{P}}(n,C_{k}) \leq 3n-6-\frac{n}{4k^{\log_23}}$ for all $n,k\geq4$.

We use $tC_{k}$ to denote the union of $t$ vertex-disjoint $k$-cycles and $t\mathcal{C}$ to denote the union of $t$ vertex-disjoint cycles without length restriction. Lan, Shi and Song~\cite{lan2019hfree} showed that $\ex_{\mathcal{P}}(n,t\mathcal{C})=3n-6$ for $t\geq 3$ and the extremal graph is the double wheel $2K_{1}+C_{n-2}$.  Lan, Shi and Song~\cite{lan2024planar} proved that $\ex_{\mathcal{P}}(n,2C_{3})=\lceil 5n/2 \rceil - 5$ for $n\geq 6$. Li~\cite{li2024} proved that $\ex_{\mathcal{P}}(n,C_{3}\cup C_{4})=\lfloor 5n/2 \rfloor - 4$ for $n\geq 20$, where $C_3\cup C_4$ denotes the union of two vertex-disjoint cycles $C_3$ and $C_4$. Fang, Lin and Shi~\cite{fang2024} proved that for $n\geq 2661$, $\ex_{\mathcal{P}}(n,2C_{4})=19n/7 - 6$ if $7|n$ and $\ex_{\mathcal{P}}(n,2C_{4})=\lfloor (19n-34)/7 \rfloor$ otherwise, they also proved that  $\ex_{\mathcal{P}}(n,2\mathcal{C})=2n-1$, where $\mathcal{C}$ denotes the family of cycles.
In this paper, we determine the exact value of the planar Tur\'an number for $C_3\text{-}C_3$ and $C_3\text{-}C_4$, where $C_{k}\text{-}C_{\ell}$ denotes the graph consisting of two disjoint cycles $C_k$ with an edge connecting them.

\begin{thm}\label{thm1}
    Let $n\geq 3$. Then
    \begin{align*}
        \begin{split}
            \ex_{\mathcal{P}}(n,C_3\text{-}C_3) = \left\{
                \begin{array}{ll}
                3n-6 & \text{if } n\leq 5,\\
                3n-7 & \text{if } n=6,\\
                \lceil 5n/2 \rceil - 5 & \text{if } 
                n\geq 7.
                \end{array}
            \right.
        \end{split}
    \end{align*}
\end{thm}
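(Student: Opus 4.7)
The plan is to split the proof into the regimes $n \leq 5$, $n = 6$, and $n \geq 7$. For $n \leq 5$ every $n$-vertex planar graph is vacuously $C_3\text{-}C_3$-free (since the forbidden graph has $6$ vertices), so $\ex_{\mathcal{P}}(n, C_3\text{-}C_3) = 3n - 6$. For $n = 6$ I would inspect the planar triangulations on $6$ vertices (up to isomorphism there are only a few, including the octahedron) and verify that each contains $C_3\text{-}C_3$, yielding $\ex_{\mathcal{P}}(6, C_3\text{-}C_3) \leq 11$; an explicit $11$-edge construction supplies the matching lower bound.

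For $n \geq 7$ the lower bound is immediate: since $C_3\text{-}C_3$ contains $2C_3$ as a subgraph, every $2C_3$-free planar graph is automatically $C_3\text{-}C_3$-free, so the Lan--Shi--Song result $\ex_{\mathcal{P}}(n, 2C_3) = \lceil 5n/2 \rceil - 5$ provides both the bound and the construction. For the upper bound, let $G$ be a $C_3\text{-}C_3$-free planar graph on $n \geq 7$ vertices. If $G$ is also $2C_3$-free then $e(G) \leq \lceil 5n/2 \rceil - 5$ by Lan--Shi--Song. Otherwise $G$ contains two vertex-disjoint triangles $T_1$ and $T_2$, and the $C_3\text{-}C_3$-free hypothesis forces $E(V(T_1), V(T_2)) = \emptyset$. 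Fix a plane embedding so that $T_2$ lies inside the disk bounded by $T_1$. The key structural consequence is: any vertex $v \notin V(T_1) \cup V(T_2)$ with at least two neighbors in $V(T_1)$ has no neighbor in $V(T_2)$, for otherwise $v$ together with its two $V(T_1)$-neighbors would form a triangle vertex-disjoint from $T_2$ and joined to it through the $v$-edge, yielding $C_3\text{-}C_3$; symmetrically for $V(T_2)$. In particular each such $v$ has at most three edges to $V(T_1) \cup V(T_2)$.

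From here I would close by induction on $n$. The easy sub-case is when some $b \in V(T_2)$ has $\deg_G(b) = 2$, so that both of its neighbors lie inside $V(T_2)$; then $G - b$ is planar and $C_3\text{-}C_3$-free on $n-1$ vertices, and a direct computation shows $e(G) \leq \lceil 5(n-1)/2 \rceil - 5 + 2 \leq \lceil 5n/2 \rceil - 5$ for both parities of $n$ (as well as in the boundary case $n = 7$, where the $n-1 = 6$ bound $11$ yields $e(G) \leq 13$). The hard sub-case is when every vertex of $V(T_2)$ has $G$-degree at least $3$; here one deletes all of $V(T_2)$ and applies the inductive bound to $G - V(T_2)$, in which case closing the induction requires showing that the total external degree $\sum_{b \in V(T_2)}(\deg_G(b) - 2)$ is at most $4$ when $n$ is even and at most $5$ when $n$ is odd.

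The main obstacle is precisely establishing this bound on the external degree of some disjoint triangle $V(T_2)$ in the hard sub-case. The strategy is to choose the pair $(T_1, T_2)$ judiciously---most likely taking $T_2$ to be the innermost disjoint triangle in the embedding so that its bounded region is minimal---and then exploit the structural observation, together with the further fact that any triangle $T_3 \subseteq I_1$ (interior to the annulus between $T_1$ and $T_2$) must have no edges to either $V(T_1)$ or $V(T_2)$, since such edges would make $T_3$ and $T_1$ (or $T_2$) a forbidden $C_3\text{-}C_3$. I expect the proof to proceed via either a careful discharging argument on the faces of the planar embedding or a refined case analysis by the number of vertices in the three regions cut out by $T_1$ and $T_2$.
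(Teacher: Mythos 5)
Your reduction (either $G$ is $2C_3$-free and the Lan--Shi--Song bound applies, or $G$ contains two disjoint triangles with no edge between them), the structural observations about neighbours of $T_1\cup T_2$, the $n\le 5$ cases, and the lower bound are all fine. But the upper bound is not proved: the entire hard sub-case rests on the claim that a judiciously chosen disjoint triangle $T_2$, all of whose vertices have degree at least $3$, has total external degree at most $4$ (for $n$ even) or $5$ (for $n$ odd), and you explicitly leave this open. Worse, the claim is false, so no choice of $T_2$ rescues this form of the induction. Take a long even cycle $C_{2k}$ (triangle-free), attach a triangle $T_2=abc$ with $a$ joined to $k$ pairwise non-adjacent (alternate) cycle vertices and $b,c$ each joined to one further independent vertex, and place a second triangle $T_1$ analogously on a disjoint portion of the cycle with no edges to $V(T_2)$. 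The only triangles in the resulting plane graph are $T_1$ and $T_2$; neither lies in a second triangle and they are joined by no edge, so the graph is $(C_3\text{-}C_3)$-free, yet every candidate triangle has external degree far exceeding $5$. The theorem is not violated because such a graph has far fewer than $\lceil 5n/2\rceil-5$ edges, but your inductive step compares $e(G)$ against the worst-case bound for $e(G-V(T_2))$ and has no slack to absorb the extra edges, so the induction as formulated breaks. You would need a stronger inductive statement (some potential function that charges the sparse triangle-free regions) or a global argument.

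Two further points. First, the induction also has unaddressed base cases: for $n=8$ and $n=9$ the graph $G-V(T_2)$ has $5$ or $6$ vertices, where the correct bounds are $9$ and $11$ rather than $\lceil 5(n-3)/2\rceil-5$, forcing external degree at most $3$, respectively $4$ --- even tighter than the (already false) general claim. Second, for comparison, the paper takes a genuinely different and non-inductive route: it partitions $V(G)$ into $3$-face-blocks, shows $\sum_{v\in B}|\overline{R_v}|\le 3|V(B)|-3$ for every block (with three small exceptional blocks treated separately), deduces $f_3(G)\le n-1$, and concludes via $2e(G)\ge 4f(G)-f_3(G)$ and Euler's formula. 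That global face-counting scheme is precisely what sidesteps the local external-degree obstruction on which your proposal founders.
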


\begin{thm}\label{thm2}
    Let $n\geq 3$. Then
    \begin{align*}
        \begin{split}
            \ex_{\mathcal{P}}(n,C_3\text{-}C_4) = \left\{
                \begin{array}{ll}
                3n-6 & \text{if } n\leq 6,\\
                3n-7 & \text{if } n=7,\\
                \lfloor 5n/2 \rfloor - 4 & \text{if } 
                n\geq 8.
                \end{array}
            \right.
        \end{split}
    \end{align*}
\end{thm}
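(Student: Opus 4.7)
The proof splits into three parts: the lower-bound constructions, the boundary case $n=7$, and the main upper bound for $n\geq 8$. For the lower bound, the key observation is that $C_3\cup C_4$ is a subgraph of $C_3\text{-}C_4$, so every $C_3\cup C_4$-free planar graph is automatically $C_3\text{-}C_4$-free. Hence Li's extremal $C_3\cup C_4$-free planar graph~\cite{li2024} with $\lfloor 5n/2\rfloor-4$ edges supplies the matching lower bound for large $n$; for $8\leq n\leq 19$ we give explicit analogous constructions of the same edge count. When $n\leq 6$, every planar triangulation is vacuously $C_3\text{-}C_4$-free (since $|V(C_3\text{-}C_4)|=7$), yielding $3n-6$. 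For $n=7$ we exhibit an explicit $14$-edge $C_3\text{-}C_4$-free planar graph. The upper bound at $n=7$ is a finite check: any $7$-vertex planar graph with $15$ edges must be a triangulation, and a case analysis of the (finitely many) $7$-vertex planar triangulations shows each one contains vertex-disjoint $C_3$ and $C_4$ joined by an edge.

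For the main upper bound when $n\geq 8$, let $G$ be an $n$-vertex plane $C_3\text{-}C_4$-free graph. If $G$ is triangle-free then $e(G)\leq 2n-4<\lfloor 5n/2\rfloor-4$, so we may assume $G$ has a triangle. The guiding structural observation is: for every triangle $T\subseteq G$ and every vertex $v\in V(G)\setminus V(T)$ adjacent to some vertex of $V(T)$, $v$ cannot lie on any $4$-cycle of $G-V(T)$, since otherwise $T$ together with that $4$-cycle would form a $C_3\text{-}C_4$. We plan to exploit this via the face structure of a plane embedding: letting $f_3$ denote the number of triangular faces, the standard Euler-formula computation
\[
  2e \;\geq\; 3f_3+4(f-f_3),\qquad f = e-n+2
\]
yields $e(G)\leq 2n-4+f_3/2$, so it suffices to prove $f_3\leq n$. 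We plan to establish $f_3\leq n$ by induction on $n$ combined with discharging. In the inductive step, remove a vertex of degree at most $2$ (when $n$ is odd) or at most $3$ (when $n$ is even) whenever one exists, using that $\lfloor 5n/2\rfloor-\lfloor 5(n-1)/2\rfloor\in\{2,3\}$ exactly matches these degree bounds; otherwise, when $\delta(G)\geq 4$, run a discharging scheme whose rules are tailored to the structural observation above, redistributing a unit of charge from each triangular face to its vertices in a way that limits accumulation.

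The principal obstacle is the high-minimum-degree case: when $\delta(G)\geq 4$, no vertex can simply be deleted, and the local analysis around a triangle must be pushed in detail. The crucial step will be to show that a vertex $v$ incident to many triangular faces forces a long ``triangle-fan'' at $v$, i.e.\ a long path of adjacencies among its neighbors, and that such configurations are incompatible, under planarity and the $C_3\text{-}C_4$-freeness, with a global violation $f_3>n$. Carrying out this discharging-based case analysis---and correctly handling the interaction between triangles and disjoint $4$-cycles that touch their neighborhoods---is where the bulk of the work is expected to lie.
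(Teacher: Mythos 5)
Your lower-bound constructions and the small cases $n\leq 7$ are fine, and your reduction of the upper bound to the inequality $f_3\leq n$ via $e\leq 2n-4+f_3/2$ is exactly the pivot on which the paper's proof also turns. But the proof of $f_3\leq n$ \emph{is} the theorem, and you have not given it: you defer the whole of it to an unspecified discharging scheme and yourself flag that "the bulk of the work is expected to lie" there. This is a genuine gap, not a routine verification. The paper's Lemma~\ref{lemma3} shows why: there exist $(C_3\text{-}C_4)$-free configurations (the four exceptional $3$-face-blocks $B_1,\dots,B_4$) in which the number of incident $3$-faces strictly exceeds three times the number of vertices, so no purely local charge rule of the kind you describe can succeed; one must classify these exceptional blocks and then argue globally (the paper does this via a bipartite incidence graph between bad blocks and their cut vertices, which has girth at least $6$) that they cannot occur densely enough to push $f_3$ past $n$, and in one regime the paper abandons $f_3\leq n$ altogether and bounds $e(G)$ by a component-wise count instead. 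None of this structure is anticipated in your sketch.

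The induction scaffolding you do specify also has concrete holes. Your case split is "delete a vertex of degree $\leq 2$ when $n$ is odd, $\leq 3$ when $n$ is even, otherwise $\delta(G)\geq 4$": for odd $n$ with $\delta(G)=3$ neither branch applies (deleting a degree-$3$ vertex costs $3$ but the increment $\lfloor 5n/2\rfloor-\lfloor 5(n-1)/2\rfloor$ is only $2$). The base case $n=8$ fails as well, since deleting a degree-$3$ vertex and invoking the $n=7$ bound gives $e(G)\leq 14+3=17>16=\lfloor 40/2\rfloor-4$, so $n=8$ needs an independent argument. Finally, you conflate two different induction targets: the deletion step controls $e(G)$ directly, while the discharging step is aimed at $f_3\leq n$; these must be reconciled into a single inductive statement. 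In short, the approach is not unreasonable in outline, but the decisive structural analysis is missing and the parts that are written down do not yet close.
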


Since $C_3 \cup C_k$-free graphs are also $(C_3\text{-}C_k)$-free and our extremal graphs are also $C_3 \cup C_k$-free, we could deduce the planar Tur\'an number of $2C_3$ and $C_3 \cup C_4$ without any restriction to $n$. By the way, there is a tiny error about planar Tur\'an number of $2C_3$ in~\cite{lan2024planar}. They showed that $\ex_{\mathcal{P}}(6,2C_3)=10$ but actually it is $11$ as shown in Figure~\ref{face-blocks-equation}. Here we revise their result as a corollary. 

\begin{cor}\cite{lan2024planar}
    Let $n\geq 3$. Then
    \begin{align*}
        \begin{split}
            \ex_{\mathcal{P}}(n,2C_3) = \left\{
                \begin{array}{ll}
                3n-6 & \text{if } n\leq 5,\\
                3n-7 & \text{if } n=6,\\
                \lceil 5n/2 \rceil - 5 & \text{if } 
                n\geq 7.
                \end{array}
            \right.
        \end{split}
    \end{align*}
\end{cor}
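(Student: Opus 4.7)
The plan is to deduce the corollary directly from Theorem~\ref{thm1} via a one-line structural comparison plus an inspection of the extremal constructions appearing in the proof of Theorem~\ref{thm1}.

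First I record the structural observation: if a planar graph contains $C_3\text{-}C_3$, then deleting the connecting edge shows it also contains $2C_3$. Equivalently, every $(2C_3)$-free planar graph is automatically $(C_3\text{-}C_3)$-free, so
\[
\ex_{\mathcal{P}}(n,2C_3)\ \le\ \ex_{\mathcal{P}}(n,C_3\text{-}C_3).
\]
By Theorem~\ref{thm1}, the right-hand side already matches the value claimed in the corollary in each of the three ranges $n\le 5$, $n=6$, and $n\ge 7$. So the upper bound is free.

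For the matching lower bound I would exhibit a $(2C_3)$-free planar graph with the correct edge count in each range. For $n\le 5$ any maximal planar graph works, since $2C_3$ requires six vertices. For $n=6$ I would invoke the explicit $11$-edge example displayed in Figure~\ref{face-blocks-equation} and verify by inspection that all of its triangles meet in a common vertex, so no two of them are vertex-disjoint. For $n\ge 7$ I would reuse the extremal construction from the lower-bound portion of Theorem~\ref{thm1}; the key structural feature, made explicit in the paragraph preceding the corollary, is that every triangle in that construction passes through a fixed edge $uv$, so in particular the graph is $(2C_3)$-free and already realizes $\lceil 5n/2\rceil - 5$ edges.

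The principal (and essentially only) obstacle is the bookkeeping step of confirming that the extremal graphs built in Theorem~\ref{thm1} have the strictly stronger $(2C_3)$-free property; this is exactly the remark the authors insert just before the corollary. The remaining subtle point, which explains the discrepancy with \cite{lan2024planar}, is the case $n=6$: one must check both that the $11$-edge example in Figure~\ref{face-blocks-equation} is $(2C_3)$-free and that no planar graph on six vertices with $12=3n-6$ edges can avoid $2C_3$. The latter is handled by enumerating the (very few) $6$-vertex planar triangulations and observing that each one contains a pair of vertex-disjoint triangles, thereby pinning down $\ex_{\mathcal{P}}(6,2C_3)=11=3n-7$.
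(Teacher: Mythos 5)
Your proposal is correct and is essentially the paper's own justification, which consists of the single remark preceding the two corollaries: $2C_3$-free planar graphs are $(C_3\text{-}C_3)$-free, so Theorem~\ref{thm1} gives the upper bound, and the extremal graphs exhibited for Theorem~\ref{thm1} are themselves $2C_3$-free, giving the matching lower bound; your proposed enumeration of $6$-vertex triangulations is redundant, since $\ex_{\mathcal{P}}(6,2C_3)\le\ex_{\mathcal{P}}(6,C_3\text{-}C_3)=11$ already disposes of the $12$-edge case. One small caution: two of your ``by inspection'' claims are stronger than what actually holds --- in the $11$-edge block of Figure~\ref{face-blocks-equation} the triangles need not all pass through a single common vertex, and in the $n\ge 7$ extremal graph the triangles $up_ip_{i+1}$ along the path do not use the edge $uv$ (they all contain the vertex $u$, not the edge); the correct and sufficient statement in both cases is simply that no two triangles are vertex-disjoint.
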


\begin{cor}\cite{li2024}
    Let $n\geq 3$. Then
    \begin{align*}
        \begin{split}
            \ex_{\mathcal{P}}(n,C_3\cup C_4) = \left\{
                \begin{array}{ll}
                3n-6 & \text{if } n \leq 6,\\
                3n-7 & \text{if } n=7,\\
                \lfloor 5n/2 \rfloor - 5 & \text{if } 
                n\geq 8.
                \end{array}
            \right.
        \end{split}
    \end{align*}
\end{cor}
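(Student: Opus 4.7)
The plan is to deduce the corollary from Theorem~\ref{thm2} via the basic observation that $C_3 \cup C_4$ is a proper subgraph of $C_3\text{-}C_4$ (delete the connecting edge), so every $(C_3 \cup C_4)$-free graph is also $(C_3\text{-}C_4)$-free. This yields $\ex_{\mathcal{P}}(n, C_3 \cup C_4) \le \ex_{\mathcal{P}}(n, C_3\text{-}C_4)$ for all $n$. Combined with Theorem~\ref{thm2}, the upper bound in the corollary is settled for $n \le 7$ immediately. For matching lower bounds, when $n \le 6$ the graph $C_3 \cup C_4$ has seven vertices and cannot be a subgraph, so any planar triangulation achieves $3n-6$; for $n = 7$, I would verify that the extremal $7$-vertex graph used in the proof of Theorem~\ref{thm2} is in fact $(C_3 \cup C_4)$-free (since any partition of its $7$ vertices into a $3$-set and a $4$-set must fail to induce both a $C_3$ and a $C_4$).

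The crucial range is $n \ge 8$, where Theorem~\ref{thm2} only provides the bound $\lfloor 5n/2 \rfloor - 4$, which is one edge too many. For the lower bound, I would take the explicit extremal construction appearing in the proof of Theorem~\ref{thm2}, identify a single edge whose removal destroys every copy of $C_3 \cup C_4$, and delete it; this produces a $(C_3 \cup C_4)$-free planar graph on $n$ vertices with $\lfloor 5n/2 \rfloor - 5$ edges. For the matching upper bound, I would revisit the equality case of Theorem~\ref{thm2}: if a planar graph $G$ on $n \ge 8$ vertices with $\lfloor 5n/2 \rfloor - 4$ edges is $(C_3\text{-}C_4)$-free, the structural characterization extracted from the proof of Theorem~\ref{thm2} pins its form down precisely, and one then checks that each such graph necessarily contains a $C_3 \cup C_4$ as a (non-adjacent) subgraph. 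Consequently no $(C_3 \cup C_4)$-free graph can attain $\lfloor 5n/2 \rfloor - 4$, leaving $\lfloor 5n/2 \rfloor - 5$ as the correct value.

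The main obstacle is the last verification: that every extremal $(C_3\text{-}C_4)$-free planar graph at the Theorem~\ref{thm2} threshold contains a disjoint $C_3$ and $C_4$. The margin is only a single edge, so the argument cannot rely on slack in the face-count or discharging inequalities; it must instead track the equality cases of those inequalities throughout the proof of Theorem~\ref{thm2} to identify the extremal graphs explicitly and then exhibit the forbidden subgraph in each. An alternative route, minimizing new work, is to invoke Li's theorem directly for $n \ge 20$ (with the small indexing correction noted by the authors) and dispose of the finite range $8 \le n \le 19$ by combining the Theorem~\ref{thm2} upper bound with a short case analysis that rules out the $(C_3\text{-}C_4)$-free extremal graphs as candidates.
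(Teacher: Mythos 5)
The paper's entire proof of this corollary is the one-sentence remark preceding it: since every $(C_3\cup C_4)$-free graph is $(C_3\text{-}C_4)$-free, Theorem~\ref{thm2} gives the upper bound, and since the extremal construction for Theorem~\ref{thm2} is itself $C_3\cup C_4$-free, the same value is attained from below. This forces $\ex_{\mathcal{P}}(n,C_3\cup C_4)=\ex_{\mathcal{P}}(n,C_3\text{-}C_4)=\lfloor 5n/2\rfloor-4$ for $n\geq 8$; the ``$-5$'' printed in the corollary is a typo (it also contradicts Li's theorem as quoted in the introduction, which states $\lfloor 5n/2\rfloor-4$ for $n\geq 20$). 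You correctly set up the containment upper bound and the small cases $n\leq 7$, but you then took the printed ``$-5$'' at face value, and your plan for $n\geq 8$ is built around proving a statement that is false.

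Concretely, the step that fails is your claim that every $(C_3\text{-}C_4)$-free planar graph with $\lfloor 5n/2\rfloor-4$ edges must contain a $C_3\cup C_4$. The paper's extremal graph for Theorem~\ref{thm2} is a counterexample: it is $K_2\vee M$ where the $K_2$ is $\{v,v_3\}$ and $M$ is a perfect matching on the remaining $n-2$ vertices (the ``$(4,1,1,\dots)$'' construction with $v_3$ joined to everything), and it has $\lfloor 5n/2\rfloor-4$ edges. Deleting $v$ and $v_3$ leaves a matching, so every cycle passes through $v$ or $v_3$, and any cycle through exactly one of them is a triangle; hence any two vertex-disjoint cycles are both triangles, and the graph contains no $C_3\cup C_4$ (nor $C_3\text{-}C_4$). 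So the lower bound $\lfloor 5n/2\rfloor-4$ holds for $C_3\cup C_4$ as well, and no amount of equality-case analysis of Theorem~\ref{thm2} can push the upper bound down to $\lfloor 5n/2\rfloor-5$. Your own fallback of invoking Li's theorem for $n\geq 20$ would have returned $\lfloor 5n/2\rfloor-4$ and flagged the discrepancy. The correct (and essentially trivial) proof is the paper's sandwich argument with the constant $-4$.
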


\section{Preliminaries}

We refer to a planar embedding of a planar graph as \emph{plane graph}.
For a plane graph $G$, a face of size $i$ is called an $i$-\emph{face}. The number of $i$-faces and the total number of faces in a plane graph $G$ are denoted by $f_i(G)$ and $f(G)$, respectively. Let $E_{i,j}$ denote the set of edges in $G$ that belong to one $i$-\emph{face} and one $j$-\emph{face} (and belong to two $i$-\emph{faces} when $i=j$), $E_i$ denote the set of edges in $G$ that belong to at least one $i$-\emph{face}. Let $e_{i,j}=|E_{i,j}|$ and $e_i = |E_i|$. Let $e(G_1:G_2)$ denote the edges between  the graph $G_1$ and $G_2$. We will get the following property.

\begin{pro}\label{pro1}  Let $G$ be a plane graph. For all $i \geq 3$,
    \begin{itemize}
        \item[1)] $e_{i,i} \leq e_i \leq e(G)$;
        
        \item[2)] $if_i(G) = e_i+e_{i,i}$.
        
    \end{itemize}
\end{pro}

Let $G\vee H$ denote the graph obtained from $G$ and $H$ by joining all edges between $V(G)$ and $V(H)$.
A \emph{fan} is the graph $K_1\vee H$ where $H$ is a path. A \emph{wheel} is the graph $K_1\vee H$ where $H$ is a cycle. A $\Theta_{k}$ is the family of Theta graphs on $k\geq4$ vertices, that is, graphs obtained from a cycle $C_k$ by adding an additional edge joining two non-consecutive vertices. Given a vertex $v \in V(G)$, let $\overline{R_v}$ denote the set of $3$-faces containing $v$, that is $\overline{R_v}=\{F \ | \ F \ \text{is a 3-face} \ \text{and} \ F \ \text{is incident with } v\}$. Then we have:

\begin{equation*}\label{equation}
    \sum_{v \in V(G)}|\overline{R_v}|=3f_3(G).
\end{equation*}

We regard $R_v$ as a subgraph of $G$, which satisfies $V(R_v)=V(\overline{R_v})$ and $E(R_v)=\{e \ | \ e \in F \ \text{and} \ F \ \text{is incident with } v\}$. For each vertex $v$ with $\overline{R_v} \neq \emptyset$, $R_v$ is a wheel or $R_v$ consists of $\ell$ fans $F_1, F_2,...,F_{\ell}$. 
We use $(k_1,k_2,...,k_{\ell})$ to denote the partition of $R_v$ such that each part $F_i$ contains $k_i$ $3$-faces of $R_v$ for $i\in[\ell]$, where $\sum_{i=1}^{\ell}k_i=| \overline{R_v}|$ and ${k_{1}\geq k_{2}\geq\cdots\geq k_{\ell}\geq 1}$, and we say $F_1$ is the first part, $F_2$ is the second part and so on. For convenience, we label all the vertices incident to $v$ in each $F_i$ by $\{ v_{k_0+\cdots+k_{i-1}+i}, v_{k_0+\cdots+k_{i-1}+i+1},...,v_{k_0+\cdots+k_{i-1}+k_{i}+i} \}$ in a clockwise direction, where $k_0=0$ and $i\in[\ell]$.
See Figure~\ref{Block-partition} for an example when the partition of $R_v$ is $(2,2,1,1)$.

\begin{figure}[H]
    \centering
    \includegraphics[width=0.3\linewidth]{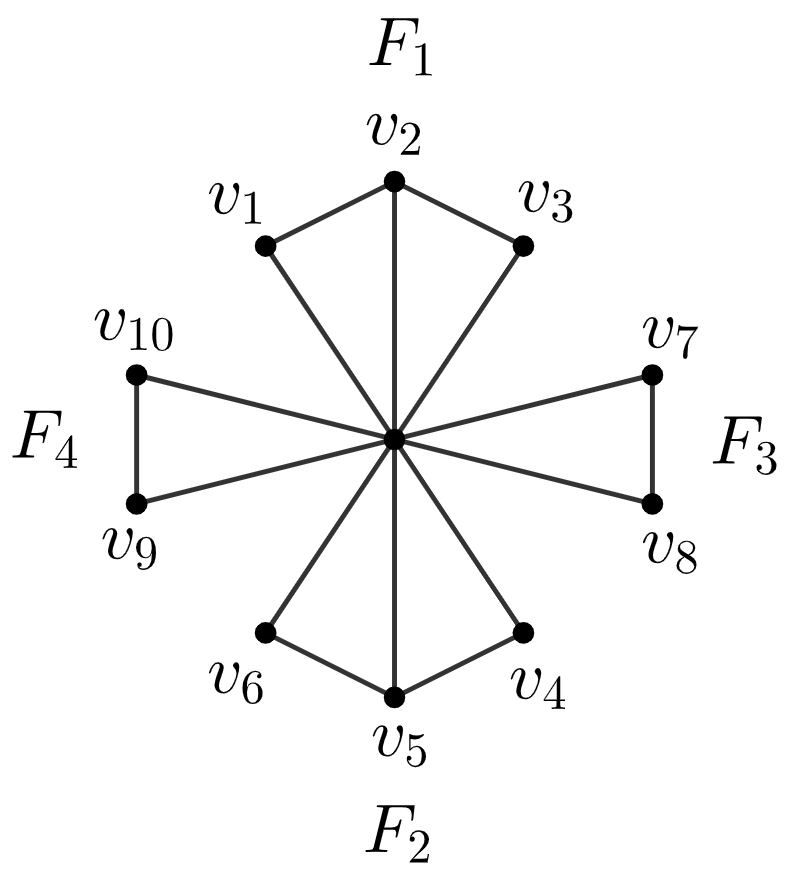}
    \caption{$R_v$ with the partition $(2,2,1,1)$.}
    \label{Block-partition}
\end{figure}

\begin{definition}\label{def}
    Let $G$ be a plane graph, A $3$-face-block $B\subseteq V(G)$ is a maximal connected subgraph which satisfies that for any two vertices in $B$, there is a path connecting them and each edge of this path is contained in some $3$-face.
\end{definition}

Let $G$ be a plane graph. A $3$-face-block $B\subseteq V(G)$ can be built as follows : Begin from a vertex $v\in V(G)$,
\begin{itemize}
    \item[1)]  if $v$ is not incident with any $3$-face of $G$, then we take $B=\{v\}$;
    
    \item[2)] Otherwise, we start from $B_1=\{v\}$, $B_2=\{u | u\ \text{is incident with $v$ by a 3-face}\}$, $B_3=\{u | u\ \text{is incident with the vertex in $B_2$ by a $3$-face and} \ u \notin B_1 \cup B_2\}$ and similarly we define $B_k=\{u | u\ \text{is incident with some vertex in $B_{k-1}$ by a $3$-face and} \ u \notin \bigcup\limits_{i=1}^{k-1} B_i\}$. Then the $3$-face-block built from $v$ is $B=\bigcup\limits_{i=1}^{\infty} B_i$.
    
\end{itemize}

Let $\mathcal{B}(G)$ denote the set of all $3$-face-blocks in a plane graph $G$. Then we have:

\begin{rem}
     $\mathcal{B}(G)$ is unique, and $\mathcal{B}(G)$ is a partition of $V(G)$.
\end{rem}

\section{Proof of Theorem~\ref{thm1}}

To prove Theorem \ref{thm1}, we give some auxiliary lemmas.

\begin{lemma}\label{R-v}
    Given a vertex $v$ in a $(C_3\text{-}C_3)$-free plane graph, if $R_u\subseteq R_v$ for each vertex $u\in V(R_v)$, then $R_v$ is a $3$-face-block and $\sum_{u \in R_v}|\overline{R_u}| \leq 3|V(R_v)|-3$.
\end{lemma}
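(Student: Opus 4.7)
The plan is to prove the two assertions in turn. For the first claim, that $R_v$ is a 3-face-block, I would show that $V(R_v)$ coincides with the 3-face-block containing $v$. Containment in a single 3-face-block is immediate because every vertex of $V(R_v)\setminus\{v\}$ shares a 3-face with $v$. For maximality, fix any $w$ reachable from $v$ by a path $v=w_0,w_1,\ldots,w_k=w$ whose edges all lie in 3-faces of $G$ and induct on $i$: assuming $w_i\in V(R_v)$, the hypothesis $R_{w_i}\subseteq R_v$ applies, the 3-face supplying the edge $w_iw_{i+1}$ belongs to $\overline{R_{w_i}}$, and hence $w_{i+1}\in V(R_{w_i})\subseteq V(R_v)$.

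For the counting inequality, the key observation is that for any $u\in V(R_v)$ and any $F\in\overline{R_u}$, the hypothesis gives $V(F)\subseteq V(R_u)\subseteq V(R_v)$. Therefore every 3-face incident to a vertex of $V(R_v)$ has all three of its vertices in $V(R_v)$, and a vertex--face double count yields
\[
\sum_{u\in V(R_v)}|\overline{R_u}|=3N,
\]
where $N$ is the number of 3-faces $F$ of $G$ with $V(F)\subseteq V(R_v)$; the task reduces to $N\leq|V(R_v)|-1$.

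Split the $N$ faces into the $|\overline{R_v}|$ wedges at $v$ and potential ``extras'' $T=\{u_1,u_2,u_3\}\subseteq V(R_v)\setminus\{v\}$. For any such extra, each edge $u_iu_j$ lies in $E(R_{u_i})\subseteq E(R_v)$ and therefore comes from a wedge $\{v,u_i,u_j\}$; this forces the three pairs $(u_1,u_2),(u_2,u_3),(u_1,u_3)$ to be simultaneously cyclically consecutive around $v$, which is impossible along any one path-shaped fan (and the fans being vertex-disjoint also excludes the configuration split across fans), and in the wheel case forces the cycle around $v$ to be $C_3$, i.e.\ $R_v=K_4$. Outside this borderline, no extras arise, so $N=|\overline{R_v}|$, which is $n'-1$ for a wheel with at least four spokes and $n'-1-\ell$ for $\ell$ fans --- both $\leq n'-1$. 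I expect the main obstacle to be the clean use of $(C_3\text{-}C_3)$-freeness: the crucial input is that if an extra $T$ coexisted with a wedge $\{v,w_1,w_2\}$ having $\{w_1,w_2\}\cap T=\emptyset$, then $T$ and $\{v,w_1,w_2\}$ would be vertex-disjoint triangles joined by the edge $vu_1$, a forbidden $C_3\text{-}C_3$; the remaining $R_v=K_4$ configuration is dispatched via the outer-face structure of $R_v$ sitting inside $G$.
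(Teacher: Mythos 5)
Your double-counting route (every $3$-face meeting $V(R_v)$ has all three vertices inside $V(R_v)$, so the sum equals $3N$ and one bounds $N$ by $|V(R_v)|-1$) is genuinely different from the paper's proof, which is a two-line local estimate: the hypothesis forces $|\overline{R_u}|\le 2$ for every $u\in V(R_v)\setminus\{v\}$, because each $3$-face at $u$ then contains $v$, hence contains the edge $uv$, and an edge lies on at most two faces; summing gives $\sum_u|\overline{R_u}|\le |\overline{R_v}|+2(|V(R_v)|-1)\le 3|V(R_v)|-3$. Your induction along a path for the $3$-face-block claim is fine (the paper omits that part entirely).

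There is, however, a genuine gap in your handling of the ``extras.'' You read $R_u\subseteq R_v$ as subgraph containment, so a $3$-face $T=u_1u_2u_3$ avoiding $v$ is admissible whenever each edge $u_iu_j$ lies in $E(R_v)$, and you correctly reduce this to the configuration in which $vu_1u_2$, $vu_2u_3$, $vu_1u_3$ are all faces, i.e.\ $R_v\cong K_4$. But that case is not ``dispatched via the outer-face structure'': take $G=K_4$ embedded with all four faces triangular. This is a $(C_3\text{-}C_3)$-free plane graph, the hypothesis holds under your reading ($R_{u_i}=R_v$ as subgraphs), yet $\sum_u|\overline{R_u}|=12=3|V(R_v)|$, which exceeds $3|V(R_v)|-3$ --- this is precisely the exceptional block $\mathbf{B}_3$ with $v_1v_2v_3$ a face that Lemma~\ref{face-block} has to list separately. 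So no argument can close your remaining case; it must be excluded by the hypothesis. The paper's proof implicitly interprets $R_u\subseteq R_v$ as $\overline{R_u}\subseteq\overline{R_v}$ (every $3$-face at $u$ contains $v$), under which an extra face $T$ contradicts the hypothesis outright, your count collapses to $N=|\overline{R_v}|\le|V(R_v)|-1$ with no exceptional configuration, and the rest of your argument goes through. You should either adopt that interpretation explicitly at the outset or carve out the tetrahedral case as an exception to the stated inequality.
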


\begin{proof}
    Since $R_u\subseteq R_v$ for each vertex $u\in V(R_v)$, we have $|\overline{R_u}| \leq 2$. 
    It follows that
    $\sum_{u \in R_v}|\overline{R_u}| \leq |\overline{R_v}|+2(|V(R_v)|-1)\leq |V(R_v)|-1+2(|V(R_v)|-1)=3|V(R_v)|-3.$
\end{proof}

\begin{lemma}\label{partition}
    Given a vertex $v$ in a $(C_3\text{-}C_3)$-free plane graph $G$ with $R_v\neq\emptyset$, suppose that $(k_1,k_2,...,k_{\ell})$ is a partition of $R_v$. If all the following conditions: 
    \begin{enumerate}
        \item $k_1\leq 2$ when $\ell=3$;
        \item $k_1\leq 3$ and $k_1+k_2\leq 5$ when $\ell=2$;
        \item $k_1\leq 4$ when $\ell=1$;
    \end{enumerate} 
    do not hold, then we have $R_u \subseteq R_v$ for each vertex $u$ in $R_v$.
\end{lemma}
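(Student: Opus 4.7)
The approach is proof by contradiction. Suppose some $u \in V(R_v)$ has $R_u \not\subseteq R_v$; then there is a $3$-face $T_u$ of $G$ containing $u$ with $T_u \notin \overline{R_v}$. Since every member of $\overline{R_v}$ contains $v$, we have $v \notin V(T_u)$, so we may write $T_u = uwx$ with $w, x \ne v$. The whole lemma reduces to a single task: produce a face $T' \in \overline{R_v}$ whose vertex set is disjoint from $\{u, w, x\}$. For any such $T'$, the triangles $T_u$ and $T'$ are vertex-disjoint, and the edge $uv$ (with $u \in T_u$, $v \in T'$) joins them, so $G$ contains a $C_3\text{-}C_3$, contradicting the hypothesis.

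To produce such $T'$, I call a face $v v_{i,j} v_{i,j+1} \in \overline{R_v}$ \emph{blocked} if $\{v_{i,j}, v_{i,j+1}\} \cap \{u, w, x\} \ne \emptyset$, and I bound the total number of blocked faces. The basic ingredients are: (a) $u$ belongs to exactly one fan $F_{i_0}$ and is in at most $2$ faces of $\overline{R_v}$; (b) each of $w, x$ belongs to at most $2$ faces of $\overline{R_v}$ if it lies in $V(R_v)$, and to none otherwise; (c) the planar rotation at $u$ places the $R_v$-faces in one consecutive wedge and $T_u$ in the complementary wedge, so the consecutive neighbours $w, x$ of $u$ in the complementary wedge can overlap $F_{i_0}$ only at the fan-neighbours $v_{i_0, j_0 \pm 1}$ of $u$, and any such overlap re-uses a face already blocked by $u$ (saving one in the count).

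With these tools the proof splits on $\ell$. For $\ell = 1$ with $k_1 \ge 5$, the fan is a path of $k_1+1$ vertices carrying $k_1$ faces, and the blocked count is at most $4$ (worst case: $u$ interior with $\{w, x\} = \{v_{j_0-1}, v_{j_0+1}\}$), leaving at least $k_1 - 4 \ge 1$ free face. For $\ell = 3$ with $k_1 \ge 3$, the vertex $u$ belongs to a single fan, and the remaining two fans contribute at least two additional faces whose non-$v$ vertices are disjoint from $V(F_{i_0})$; $w, x$ can collectively block at most $4$ of the total $\ge 5$ faces, so one remains free. For $\ell = 2$, I split on the location of $u$ within $F_1$ or $F_2$: if the fan not containing $u$ is large enough (invoking $k_1 \ge 4$ or $k_1 + k_2 \ge 6$), it supplies a $T'$ avoiding $\{w, x\}$; otherwise the $\ell = 1$ style counting inside the fan containing $u$ finishes the job.

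The principal obstacle I expect is the $\ell = 2$ subcase with partition $(3, 3)$, where $k_1 = k_2 = 3$ is exactly tight and the naive bound of $6$ blocked faces fails to separate the faces of $F_1$ from those of $F_2$. The resolution leans on ingredient (c): because $T_u$ lies in the wedge complementary to the fan wedge at $u$, the neighbours $w, x$ cannot produce fan-vertex overlaps in \emph{both} fans in an unbounded way; a careful accounting shows that at least one face of $\overline{R_v}$ remains unblocked, producing the required $C_3\text{-}C_3$ and the desired contradiction.
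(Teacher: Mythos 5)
Your overall framing matches the paper's (contradiction via a $3$-face $F=uwx$ not containing $v$, then exhibiting a $C_3\text{-}C_3$), but the claimed reduction --- ``the whole lemma reduces to producing a face $T'\in\overline{R_v}$ vertex-disjoint from $\{u,w,x\}$'' --- is false, and this is exactly where the hard cases live. Take $\ell=1$, $k_1=5$, with the fan $v,v_1,\dots,v_6$ and $F=v_1v_3v_5$ (planarity does not forbid this: the three chords can be drawn outside the fan, so your ingredient (c) is also wrong --- $w,x$ need not be fan-neighbours of $u$). Every face $vv_iv_{i+1}$, $i=1,\dots,5$, meets $\{v_1,v_3,v_5\}$, so no disjoint $T'$ exists; your ``blocked count at most $4$'' is an arithmetic error, since $u$ can block $2$ faces and each of $w,x$ can block $2$ more, for up to $6$. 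The same failure occurs for $\ell=2$, $k_1=4$ with $F=v_2v_4v_6$: all five faces of $\overline{R_v}$ are blocked. Yet the lemma is still true in these configurations, so your method cannot close them.

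The missing idea, which the paper uses precisely at these tight spots, is to build one of the two triangles of the $C_3\text{-}C_3$ from a \emph{chord} of $F$ together with two fan edges, rather than insisting both triangles come from $\overline{R_v}\cup\{F\}$. For $F=v_1v_3v_5$ in the fan, the triangle $v_1v_2v_3$ (edges $v_1v_2$, $v_2v_3$ from the fan and the chord $v_1v_3$ from $F$) is vertex-disjoint from the face $vv_5v_6$ and joined to it by the edge $v_3v$; for $F=v_2v_4v_6$ in the $(4,k_2)$ case, the triangle $v_2v_3v_4$ pairs with the face $vv_6v_7$. Your $(3,3)$ worry, by contrast, is actually the easy case: the four faces $vv_1v_2$, $vv_3v_4$, $vv_5v_6$, $vv_7v_8$ lie over four pairwise disjoint vertex pairs, and three vertices of $F$ cannot meet all four, so a disjoint face does exist there. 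To repair the proof you must add the chord-triangle argument for the boundary configurations where every face of $\overline{R_v}$ meets $V(F)$.
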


\begin{proof}
    Suppose, to the contrary, that $R_u\not\subseteq R_v$ for some vertex $u$ in $R_v$. 
    It implies that there is an edge $e$ not containing $v$ such that $F=e\cup \{u\}$ is a $3$-face in $G$.

    If $\ell\geq 4$, then we could easily find a copy of $C_3\text{-}C_3$ since the $3$-face $F$ intersects at most three parts of $R_v$, a contradiction.

    If $\ell=3$ and $k_1\geq 3$, recall that $G$ is $(C_3\text{-}C_3)$-free, then three vertices of $F$ belong to three parts of $R_v$, respectively. Without loss of generality, assume that $u$ belongs to the first part. Note that $k_1\geq 3$. It is not difficult to find a copy of $C_3\text{-}C_3$ which is $F$ and a $3$-face of the first part and an edge connecting them, a contradiction.

    If $\ell=2$ and $k_1\geq 4$, recall that $G$ is $(C_3\textbf{-}C_3)$-free, then $F$ contains one vertex of $\{v_1,v_2\}$, one vertex of $\{v_{k_1},v_{k_1+1}\}$ and one vertex of $\{v_{k_1+2},v_{k_1+3}\}$, respectively.
    If $k_1\geq 5$, then we could find a copy of $C_3\textbf{-}C_3$ which is the $3$-face $F$ and the $3$-face $v_3v_4v$ and an edge connecting them, a contradiction.
    Hence, assume that $k_1=4$. If $v_2v_4\in E(F)$, then we could find a copy of $C_3\textbf{-}C_3$ between the triangle $v_2v_3v_4$ and the $3$-face $vv_{k_1+2}v_{k_1+3}$, a contradiction. If $v_1v_4\in E(F)$ or $v_1v_5\in E(F)$ or $v_2v_5\in E(F)$, then it is easy to find a copy of $C_3\textbf{-}C_3$ which is $F$ and the $3$-face $v_2v_3v$ or $v_3v_4v$ and an edge connecting them,  a contradiction.
    
    If $\ell=2$ and $k_1+k_2\geq 6$, from the discussion above, we could assume that $k_1=k_2=3$.
    To avoid the appearance of $C_3\textbf{-}C_3$, the $3$-face $F$ intersects at least one vertex of $\{v_1,v_2\}$, one vertex of $\{v_3,v_4\}$, one vertex of $\{v_5,v_6\}$ and one vertex of $\{v_7,v_8\}$, a contradiction. 

    If $\ell=1$ and $k_1\geq5$, then $R_v$ is a wheel or a fan. First, we assume that $R_v$ is a fan. Since $k_1+1\geq 6$, the $3$-face $F$ must be $v_1v_3v_5$ or $v_2v_4v_6$ otherwise we could find a copy of $C_3\textbf{-}C_3$ which is $F$ and $v_iv_{i+1}v$ and an edge connecting them, where $v_i$ and $v_{i+1}$ are not in $F$. However, a copy of $C_3\textbf{-}C_3$ appears which is the $3$-face $v_5v_6v$ and the triangle $v_1v_2v_3$ or  the $3$-face $v_5v_6v$ and the triangle $v_2v_3v_4$ and an edge connecting them, a contradiction.
    Now we may assume that $R_v$ is a wheel. Since $k_1\geq 5$, the $3$-face $F$ must intersect three vertices with $R_v$ to avoid the appearance of $C_3\textbf{-}C_3$. If $k_1\geq 6$, then the $3$-face $F$ must be $v_1v_3v_5$ or $v_2v_4v_6$ as in the same discussion above. We could also find a copy of $C_3\textbf{-}C_3$, a contradiction. If $k_1=5$, without loss of generality, let $F$ be $v_1v_3v_4$. Then there is a copy of $C_3\textbf{-}C_3$ which is the triangle $v_1v_2v_3$ and the $3$-face $v_4v_5v$ and an edge connecting them, a contradiction.
\end{proof}

Before stating our next lemma, we give the following three special $3$-face-blocks $\mathbf{B}_1, \mathbf{B}_2$ and $\mathbf{B}_3$:

\begin{figure}[H]
    \centering
    \includegraphics[width=0.8\linewidth]{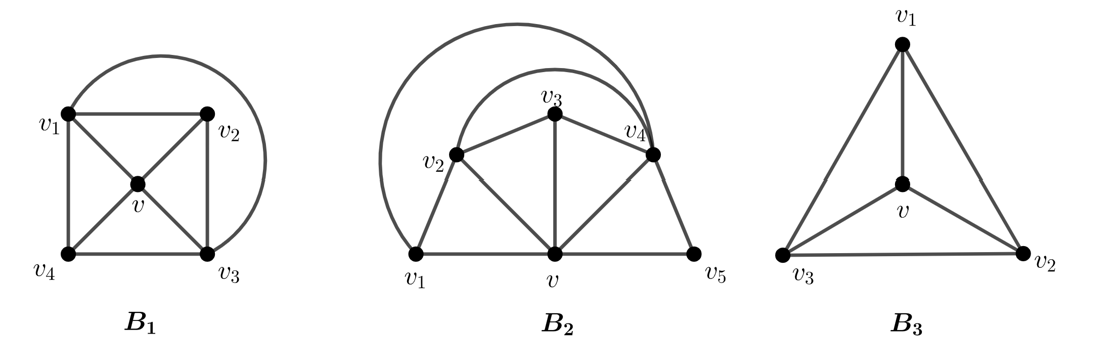}
     \caption{The $3$-face-blocks with $\sum_{v \in B}|\overline{R_v}|\geq3|V(B)|$}
    \label{face-blocks-equation}
\end{figure}

\begin{lemma}\label{face-block}
    For each $3$-face-block $B$ in a $(C_3\textbf{-}C_3)$-free plane graph $G$, we have  
    \begin{enumerate}
        \item $\sum_{v \in B}|\overline{R_v}| \leq 3|V(B)|-3$ when $B\notin\{\mathbf{B}_1,\mathbf{B}_2\}$ or $B\cong\mathbf{B}_3$ and $v_1v_2v_3$ is not a face;
        \item $\sum_{v \in B}|\overline{R_v}| = 3|V(B)|$ when $B\cong\mathbf{B}_2$ or $B\cong\mathbf{B}_1$ and $v_1v_3v_4$ is not a face or $B\cong\mathbf{B}_3$ and $v_1v_2v_3$ is a face;
        \item $\sum_{v \in B}|\overline{R_v}| = 3|V(B)|+3$ when $B\cong\mathbf{B}_1$ and $v_1v_3v_4$ is a face.
    \end{enumerate}
\end{lemma}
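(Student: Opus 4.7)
The plan is to reduce the lemma, via Lemmas~\ref{R-v} and~\ref{partition}, to a case analysis on the partition of $R_v$ for a judiciously chosen vertex $v \in V(B)$.

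First I would pick a vertex $v \in V(B)$ maximizing $|\overline{R_v}|$. If $\overline{R_v} = \emptyset$, then $B = \{v\}$ by Definition~\ref{def} and $\sum_{u\in B}|\overline{R_u}| = 0 = 3|V(B)| - 3$ is immediate, so from here on assume $\overline{R_v} \neq \emptyset$. Next I split into two main cases. If $R_u \subseteq R_v$ holds for every $u \in V(R_v)$, then no $3$-face touches $V(R_v)$ from outside, so Definition~\ref{def} forces $B = V(R_v)$ and Lemma~\ref{R-v} gives part (1). Otherwise some $u \in V(R_v)$ satisfies $R_u \not\subseteq R_v$, and Lemma~\ref{partition} constrains $(k_1,\ldots,k_\ell)$ to one of three restricted shapes: $\ell = 3$ with $k_1 \leq 2$; $\ell = 2$ with $k_1 \leq 3$ and $k_1+k_2 \leq 5$; or $\ell = 1$ with $k_1 \leq 4$. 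In particular $|\overline{R_v}| \leq 6$, and by our choice of $v$ every vertex of $B$ is incident to at most this many $3$-faces.

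For each surviving partition shape I would enumerate the ways an extra $3$-face $F$, incident to some $u \in V(R_v)\setminus\{v\}$ but not $v$, can extend $B$. The $(C_3\text{-}C_3)$-freeness, applied exactly as in the proof of Lemma~\ref{partition}, forces $F$ to share at least two vertices with $V(R_v)$; otherwise some $3$-face of $R_v$ disjoint from $F$ would sit at distance one from $F$, producing $C_3\text{-}C_3$. Using $\sum_{u \in B}|\overline{R_u}| = 3 f_3(B)$, the desired bound $\sum_{u \in B}|\overline{R_u}| \leq 3|V(B)| - 3$ fails precisely when $f_3(B) \geq |V(B)|$, and the case walk shows this forces $B$ to be a labelled copy of $\mathbf{B}_1$, $\mathbf{B}_2$, or $\mathbf{B}_3$, with the extra $+3$ in $\mathbf{B}_1$ corresponding to the presence of the chord-face $v_1v_3v_4$ and the equality in $\mathbf{B}_3$ corresponding to $v_1v_2v_3$ being a face.

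The main obstacle is the bookkeeping of this case analysis: for each restricted partition one must verify that attaching any new $(C_3\text{-}C_3)$-legal $3$-face eventually terminates the block, and, whenever $f_3(B) \geq |V(B)|$, recognize the resulting plane configuration as a labelled copy of exactly one of $\mathbf{B}_1, \mathbf{B}_2, \mathbf{B}_3$. The maximality of $|\overline{R_v}|$ is essential here: it prevents $B$ from growing through a vertex denser than $v$, so after the first extension the procedure halts, and the only configurations in which the density bound $f_3(B) < |V(B)|$ breaks are exactly the three listed blocks.
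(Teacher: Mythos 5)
Your skeleton is exactly the paper's: choose $v$ maximizing $|\overline{R_v}|$, dispose of the case $R_u\subseteq R_v$ for all $u\in V(R_v)$ via Lemma~\ref{R-v}, and otherwise invoke Lemma~\ref{partition} to restrict the partition of $R_v$ to the shapes with $|\overline{R_v}|\leq 6$, then finish by cases. The problem is that everything after that point --- which is essentially the entire content of the lemma --- is asserted rather than proved. The paper's proof is a long explicit enumeration over the partitions $(2,2,2)$, $(2,2,1)$, $(3,2)$, $(2,1,1)$, $(3,1)$, $(2,2)$, $(4)$, $(1,1,1)$, $(2,1)$, $(3)$, in each case pinning down the possible extra $3$-faces $F$ (often to one or two explicit candidates such as $v_2v_5v_8$ or $v_6v_2v_3$), using planarity to show at most two of the candidate chord-faces coexist, and using $(C_3\text{-}C_3)$-freeness to force $|\overline{R_w}|=1$ for any new vertex $w$ that enters the block. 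Writing ``the case walk shows this forces $B$ to be a labelled copy of $\mathbf{B}_1$, $\mathbf{B}_2$, or $\mathbf{B}_3$'' is restating the conclusion, not deriving it; as it stands the proposal is a plan for a proof, not a proof.

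Two of your bridging claims are also stated too strongly. First, ``$F$ must share at least two vertices with $V(R_v)$'' is justified by finding a $3$-face of $R_v$ disjoint from $F$; this works when $|\overline{R_v}|\geq 3$ (and is even strengthened to three shared vertices in cases like $(1,1,1)$ or $(2,2,2)$), but the argument as you state it does not cover small $R_v$, and in several partitions the paper needs the sharper conclusion that $F$ contains a \emph{specific} vertex (e.g.\ $v_2$ in the $(2,1)$ case). Second, ``the maximality of $|\overline{R_v}|$\,\ldots\ so after the first extension the procedure halts'' is not a consequence of maximality alone: a new vertex $w\notin V(R_v)$ could a priori carry several further $3$-faces without exceeding $|\overline{R_v}|$, and the paper rules this out by separate planarity and $(C_3\text{-}C_3)$-freeness arguments showing $|\overline{R_w}|=1$. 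Both points are repairable, but they need the explicit case-by-case work you have deferred.
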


\begin{proof}
    It is easy to verify that the statements $2$ and $3$ hold. For each $3$-face-block $B\notin\{\mathbf{B}_1,\mathbf{B}_2,\mathbf{B}_3\}$, we define $|\overline{R_B}|=\mbox{max}\{|\overline{R_u}|:u\in V(B)\}$. From Lemma~\ref{R-v}, if $R_u\subseteq R_v$ for each vertex $u$ of $R_v$, then $R_v=B$ and $\sum_{u \in R_v}|\overline{R_u}| \leq 3|V(R_v)|-3$. 
    Suppose that there is a vertex $u\in V(R_v)$ such that $R_u\not\subseteq R_v$. It follows that there is a $3$-face $F$ containing $u$ such that $F\notin R_v$. From Lemma~\ref{partition}, we have $|\overline{R_B}|\leq 6$. Now we consider the following cases depending on the value of $|\overline{R_B}|$.

    \begin{case} 
        $|\overline{R_B}|=6$.
    \end{case}
    
    From the statement $1$ of Lemma~\ref{partition}, the partition of $R_v$ is $(2,2,2)$. If $v_1=u$, to avoid the appearance of $C_3\textbf{-}C_3$ in $G$, then $F$ must contain $v_5, v_8$ and at least one vertex of $\{v_2,v_3\}$, a contradiction. Similarly, we have $u\notin\{v_1,v_3,v_4,v_6,v_7,v_9\}$. Hence, $u\in\{v_2,v_5,v_8\}$ and $F=v_2v_5v_8$. It implies that $B$ contains at most $(|\overline{R_v}|+1)$ $3$-faces. Thus, $\sum_{v\in B}|\overline{R_v}| \leq 21 < 3\times10-3$. 
    
    \begin{case} 
        $|\overline{R_B}|=5$.
    \end{case}
    
    From the statements $1$ and $2$ of~Lemma \ref{partition}, the partition of $R_v$ is $(2,2,1)$ or $(3,2)$.
    If the former holds, by an argument similar to that above, the $3$-face $F$ is $v_2v_5v_7$ or $v_2v_5v_8$. It follows from the planarity of $G$ that at most one of $v_2v_5v_7$ and $v_2v_5v_8$ is a $3$-face of $G$. Note that $B$ contains at most $(|\overline{R_v}|+1)$ $3$-faces. Thus $\sum_{v \in B}|\overline{R_v}| \leq 18 < 3\times9-3$; 
    If the latter holds, to avoid the appearance of $C_3\textbf{-}C_3$ in $G$, then $v_5v_7, v_1v_3, v_2v_4 \notin E(G)$. Since $G$ is $(C_3\textbf{-}C_3)$-free, $F$ must intersect one vertex of $\{v_5,v_6\}$, $\{v_6,v_7\}$, $\{v_1,v_2\}$ and $\{v_3,v_4\}$, respectively. It follows that $F$ is $v_6v_1v_4$ or $v_6v_2v_3$. If $F$ is $v_6v_1v_4$, then we could find a copy of $C_3\textbf{-}C_3$ which is $F$ and the $3$-face $vv_2v_3$ and an edge connecting them, a contradiction. Then $F$ is $v_6v_2v_3$. By the planarity of $G$, $B$ contains at most $(|\overline{R_v}|+1)$ $3$-faces. Thus, $\sum_{v \in B}|\overline{R_v}| \leq 18 < 3\times8-3$.

    \begin{case} 
        $|\overline{R_B}|=4$.
    \end{case}
    
    From the conditions $1$-$3$ of Lemma~\ref{partition}, the partition of $R_v$ is $(2,1,1)$ or $(2,2)$ or $(3,1)$ or $(4)$.
    
    If the partition of $R_v$ is $(2,1,1)$, by a similar discussion, then $F$ must contain $v_2$, one vertex of $\{v_4,v_5\}$ and one vertex of $\{v_6,v_7\}$, respectively. Thus, $F$ is $v_2v_4v_6$ or $v_2v_4v_7$ or $v_2v_5v_6$ or $v_2v_5v_7$. Since $G$ is planar, $B$ contains at most $(|\overline{R_v}|+1)$ $3$-faces. Then $\sum_{v \in B}|\overline{R_v}| \leq 15 < 3\times8-3$.

    If the partition of $R_v$ is $(3,1)$, to avoid the appearance of $C_3\textbf{-}C_3$ in $G$, then $F$ must contain exact one vertex of $\{v_1,v_2\}$, $\{v_3,v_4\}$ and $\{v_5,v_6\}$, respectively. If $v_1v_3\in E(G)$, then we could find a copy of $C_3\textbf{-}C_3$ which is $F$ and the $3$-face $vv_5v_6$ in $B$ and an edge connecting them, a contradiction. Similarly, we have $v_2v_4, v_1v_4\notin E(G)$. Then $F$ is $v_2v_3v_5$ or $v_2v_3v_6$. But $\{v_2v_3v_5, v_2v_3v_6\}$ cannot exist at the same time by the planarity of $G$. Then $\sum_{v \in B}|\overline{R_v}| \leq 15 < 3\times7-3$.

    If the partition of $R_v$ is $(2,2)$, to avoid the appearance of $C_3\textbf{-}C_3$ in $G$, then $F$ must contain exact one vertex of $\{v_1,v_2\}$, $\{v_2,v_3\}$, $\{v_4,v_5\}$ and $\{v_5,v_6\}$, respectively. By a similar argument, we have $v_1v_3, v_4v_6\notin E(G)$. If $V(F)\subseteq V(R_v)$, then $F$ is $v_2v_4v_5$ or $v_2v_5v_6$ or $v_5v_1v_2$ or $v_5v_2v_3$. Note that there are at most two $3$-faces in $\{v_1v_4v_5, v_1v_4v_6, v_2v_3v_5, v_2v_3v_6\}$ by the planarity of $G$. If $V(F)\not\subseteq V(R_v)$, then $F$ contains $v_2$ and $v_5$. By the maximality of $|\overline{R_B}|$, there are at most two vertices $w_1,w_2\notin V(R_v)$ such that $v_2v_5w_1$ and $v_2v_5w_2$ are $3$-faces. Recall that $G$ is $(C_3\textbf{-}C_3)$-free, $|\overline{R_{w_1}}|=|\overline{R_{w_2}}|=1$(if $w_1$ and $w_2$ exist) and there are at most two choices for $F$ in $\{v_1v_4v_5, v_1v_4v_6, v_2v_3v_5, v_2v_3v_6, v_2v_5w_1, v_2v_5w_2 \}$. Then $\sum_{v \in B}|\overline{R_v}| \leq 18 \leq 3|V(B)|-3$.

    If the partition of $R_v$ is $(4)$, then $R_v$ is a wheel or a fan.
    First, we assume that $R_v$ is a wheel.
    If $F$ contains only one vertex $v_i$ of $R_v$, then we could find a copy of $C_3\textbf{-}C_3$ which is $F$ and the $3$-face $vv_{i+1}v_{i+2}$ and an edge connecting them, where $i\in[4]$ and the subscripts are taken modulo $4$.
    If $F$ contains exact two successive vertices $v_{i}$ and $v_{i+1}$ of $R_v$, then we could find a copy of $C_3\textbf{-}C_3$ which is $F$ and the $3$-face $vv_{i-1}v_{i-2}$ and an edge connecting them, where $i\in[4]$ and the subscripts are taken modulo $4$.
    If $V(F)\subseteq V(R_v)$, then $F$ is $v_1v_2v_3$ or $v_2v_3v_4$ or $v_3v_4v_1$ or $v_4v_1v_2$.
    If $V(F)\not \subseteq V(R_v)$, then $F$ contains $\{v_1, v_3\}$ or $\{v_2, v_4\}$.
    Note that there are at most two vertices $w_1,w_2\notin V(R_v)$ such that $w_1v_1v_{3}$ and $w_2v_1v_3$ or $w_1v_2v_4$ and $w_2v_2v_4$ are $3$-faces.
    By the planarity of $G$, there are at most two choices for $F$ in $\{v_1v_2v_3, v_2v_3v_4, v_3v_4v_1, v_4v_1v_2, w_1v_1v_{3}, w_2v_1v_3, w_1v_2v_4, w_2v_2v_4\}$.
    If both $v_{i}v_{i+1}v_{i+2}$ and $v_{i}v_{i-1}v_{i-2}$ are $3$-faces, then $B\cong\mathbf{B}_1$, a contradiction. 
    If both $w_1v_iv_{i+2}$ and $w_2v_iv_{i+2}$ are $3$-faces, then $|\overline{R_{w_1}}|=|\overline{R_{w_2}}|=1$ by the hypothesis that $G$ is $(C_3\textbf{-}C_3)$-free.
    Thus, $\sum_{v \in B}|\overline{R_v}| = 18 = 3|V(B)|-3$.
    By symmetry, if both $v_{i}v_{i+1}v_{i+2}$ and $w_1v_iv_{i+2}$ are $3$-faces, then $|\overline{R_{w_1}}|=1$ by the hypothesis that $G$ is $(C_3\textbf{-}C_3)$-free.
    Thus, $B\cong\mathbf{B}_2$, a contradiction.
    Now we assume that $R_v$ is a fan. Since $G$ is $(C_3\textbf{-}C_3)$-free, we have $v_1v_3,v_3v_5\notin E(G)$.
    Using a similar argument, we could conclude that $V(F)\subseteq V(R_v)$ or $F$ is the $3$-face $v_2v_4w$, where $w\notin V(R_v)$.
    If $V(F)\subseteq V(R_v)$, then $F$ is $v_2v_3v_4$ or $v_1v_2v_4$ or $v_5v_4v_2$. 
    Note that there are at most two $3$-faces in $\{v_2v_3v_4, v_1v_2v_4, v_5v_4v_2\}$ by the planarity of $G$.
    It follows that there are at most two choices for $F$ in $\{v_2v_3v_4, v_1v_2v_4, v_5v_4v_2, v_2v_4w\}$.
    If both $v_2v_4w$ and $v_2v_3v_4$ are $3$-faces, then $\sum_{v \in B}|\overline{R_v}| \leq 18=3|V(B)|-3$.
    By symmetry of $v_1v_2v_4$ and $v_5v_4v_2$, if both $v_2v_3v_4$ and $v_1v_2v_4$ are $3$-faces, then $B\cong\mathbf{B}_2$, a contradiction. 

    \begin{case}
        $|\overline{R_B}| = 3$. 
    \end{case}

    From the conditions $1$-$3$ of Lemma~\ref{partition}, the partition of $R_v$ is $(1,1,1)$ or $(2,1)$ or $(3)$.
    
    If the partition of $R_v$ is $(1,1,1)$, to avoid the appearance of $C_3\textbf{-}C_3$, then $F$ intersects exact one vertex of $\{v_1,v_2\}$, $\{v_3,v_4\}$ and $\{v_5,v_6\}$, respectively. Note that $B$ contains at most $(|\overline{R_v}|+1)$ $3$-faces. Then $\sum_{v \in B}|\overline{R_v}| \leq 12 < 3|V(B)|-3$.

    If the partition of $R_v$ is $(2,1)$, to avoid the appearance of $C_3\textbf{-}C_3$ in $G$, then $v_1v_3\notin E(G)$ and $F$ intersects at least one vertex of $\{v_1,v_2\}$, $\{v_2,v_3\}$ and $\{v_4,v_5\}$, respectively. 
    We assert that $F$ must contain $v_2$ since $v_1v_3\notin E(G)$, otherwise we could find a copy of $C_3\textbf{-}C_3$ which is the triangle $v_1v_2v_3$ and the $3$-face $vv_4v_5$ and an edge connecting them. 
    Then $F$ must contain $v_2$. By the maximality of $|\overline{R_v}|$ in $B$, there are at most three $3$-faces containing $v_2$. 
    Then $\sum_{v \in B}|\overline{R_v}| \leq 12 < 3|V(B)|-3$.

    If the partition of $R_v$ is $(3)$, then $R_v$ is a wheel or a fan. First, we assume that $R_v$ is a wheel.
    Since $G$ is $(C_3\textbf{-}C_3)$-free, it follows that $F$ intersects at least two vertices of $R_v$.
    If $F$ is $v_1v_2v_3$, then $V(R_v)=V(G)$ and $B\cong\mathbf{B}_3$.
    Hence, $F$ intersects exact two vertices of $R_v$.
    By the maximality of $|\overline{R_B}|$ in $B$, $B$ contains three $3$-faces and four $3$-faces when $|V(B)|=4$ and $|V(B)|=5$, respectively. 
    Then $\sum_{v \in B}|\overline{R_v}| = 3|V(B)|-3$.
    Now we suppose that $R_v$ is a fan. Note that $F$ intersects at least two vertices of $R_v$ by the hypothesis that $G$ is $(C_3\textbf{-}C_3)$-free. 
    If $F$ contains exact two vertices of $R_v$, to avoid the appearance of $C_3\textbf{-}C_3$ in $G$, then $F$ is $wv_1v_3$ or $wv_2v_4$ or $wv_2v_3$, where $w\notin R_v$.
    If $V(F)\subseteq V(R_v)$, then $F$ is $v_1v_2v_3$ or $v_2v_3v_4$ or $v_1v_3v_4$ or $v_1v_2v_4$.
    By the maximality of $|\overline{R_B}|$ in $B$ and the planarity of $G$, $B$ contains at most $(|\overline{R_v}|+1)$ $3$-faces. 
    Then $\sum_{v \in B}|\overline{R_v}| \leq 12 \leq 3|V(B)|-3$.
\end{proof}
 
Now we begin to prove Theorem~\ref{thm1}.

\begin{proof} 
    Suppose that $G$ is connected first.
    It is clear that $ex_{\mathcal{P}}(n,C_3\text{-}C_3)=3n-6$ when $3\leq n\leq 5$.
    If $n=6$, by a similar discussion with Lemma~\ref{face-block}, we could conclude that $G\cong\mathbf{B}_2$ or $G$ has fewer edges than $\mathbf{B}_2$, then we are done.
    Next, we assume that $n\geq 7$ and $e(G) > \lceil 5n/2 \rceil - 5$.
    From Definition~\ref{def}, we could get a set $\mathcal{B}(G)$ of all $3$-face-blocks in the plane graph $G$. 
    Note that all elements in $\mathcal{B}(G)$ are vertex-disjoint and $\mathcal{B}(G)$ is a partition of $V(G)$.
    If each $3$-face-block satisfies $\sum_{v \in B}|\overline{R_v}| \leq 3|V(B)|-3$. 
    Then $$3f_3(G)=\sum_{v \in V(G)}|\overline{R_v}|=\sum_{B \in \mathcal{B}(G)}\sum_{v \in B}|\overline{R_v}|\leq 3n-3,$$ which implies $f_3\leq n-1$.
    
    If there exists a $3$-face-block $B\cong \mathbf{B}_1$ or $B\cong \mathbf{B}_3$, then the outerface of $B$ is not a $3$-face since $n\geq7$, we have $\sum_{v \in B}|\overline{R_v}| \leq 3|V(B)|$ by Lemma~\ref{face-block}.
    Or if there exists a $3$-face-block $B\cong \mathbf{B}_2$, then $\sum_{v \in B}|\overline{R_v}| = 3|V(B)|$ by Lemma~\ref{face-block}. 
    Recall that $G$ is connected, there exists a vertex $u$ such that $N_G(u)\cap V(B)\neq\emptyset$. 
    Note that any two $3$-face-blocks cannot be connected by an edge. Otherwise, we could find a copy of $C_3\textbf{-}C_3$ in $G$.
    Then $|\overline{R_u}|=0$, thus 
    $$3f_3(G)=\sum_{v \in V(G)}|\overline{R_v}|=\sum_{B \in \mathcal{B}(G)}\sum_{v \in B\setminus \{u\}}|\overline{R_v}|+|\overline{R_u}|\leq 3(n-1),$$
    which also implies $f_3(G)\leq n-1$.
    
    It follows that
    \begin{center}
        $2e(G)=\sum_{i\geq3}if_i(G)\geq3f_3(G)+4(f(G)-f_3(G))=4f(G)-f_3(G) \geq 4f(G)-(n-1)$,
    \end{center}
    which implies that $f(G)\leq(2e(G)+n-1)/4$.  By Euler’s formula, we have 
    \begin{center}
        $n-2=e(G)-f(G)\geq e(G)/2-(n-1)/4$.
    \end{center}
    Hence, we have $e(G) \leq \lceil 5n/2 \rceil - 5$ when $n\geq 7$, a contradiction.

    Now suppose that $G$ is not connected. Without loss of generality, assume that $G$ has $t$ connected components $G_1,G_2,...,G_t$. 
    If there is a component $G_i$ with $|V(G_i)|\geq 7$, from the discussion as above, we have $\sum_{v\in V(G_i)}|\overline{R_v}|\leq 3|V(G_i)|-3$. 
    Then $$3f_3=\sum_{v \in V(G)}|\overline{R_v}|=\sum_{B \in \mathcal{B}(G)}\sum_{v \in B}|\overline{R_v}|\leq 3(n-1).$$
    If $|V(G_i)|\leq 6$ for each integer $i\in[t]$, note that $\frac{e(G)}{v(G)}\leq \frac{11}{6}$ see Figure~\ref{face-blocks-equation},
    then $$e(G)=\sum_{i=1}^te(G_i)\leq \mbox{max}\{3n-6t,\frac{11n}{6}\}<\lceil 5n/2 \rceil - 5,$$
    which contradicts the hypothesis that $e(G) > \lceil 5n/2 \rceil - 5$. The result follows.
\end{proof}

We give the extremal graph as follows. Let $P$ be a path with $(n-2)$ vertices and $S$ be a maximum independent set of $P$ containing the two ends of $P$. 
Assume that $G$ is a planar graph with $n$ vertices obtained from $P$ by adding two new adjacent vertices $u$ and $v$ such that $u$ is adjacent to every vertex in $V(P)$ and $v$ is adjacent to every vertex in $S$, respectively. See Figure~\ref{extremal graph} for an example with $n=11$.

\begin{figure}[H]
    \centering
    \includegraphics[width=0.3\linewidth]{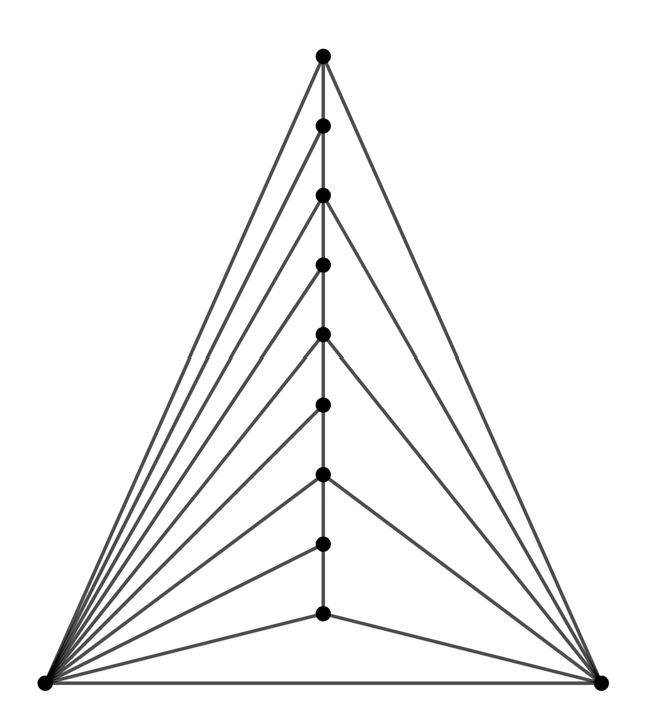}
     \caption{The extremal graph with $n=11$.}
    \label{extremal graph}
\end{figure}

\section{Proof of Theorem~\ref{thm2}}
To prove Theorem~\ref{thm2}, we give some auxiliary lemmas.

\begin{lemma}\label{lemma1}
    For an $n$-vertex plane graph $G$, if there exist $m$ faces are not $3$-faces, then we have $f_3(G) \leq 2n-2m-4$, that is, $\sum_{v \in G}|\overline{R_v}| \leq 6n-6m-12$. In particular, $f_3(G) \neq 2n-5$.
\end{lemma}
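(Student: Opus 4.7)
The plan is to apply Euler's formula together with a standard face-degree inequality. Let $f(G)$ denote the total number of faces of the plane graph $G$. Since $m$ faces are not $3$-faces, we have the decomposition $f(G)=f_3(G)+m$, and each of the $m$ non-triangular faces has size at least $4$. Summing face degrees gives
\[
2e(G)=\sum_{i\geq 3}if_i(G)\geq 3f_3(G)+4m.
\]

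Next I would eliminate $e(G)$ using Euler's formula $e(G)=n-2+f(G)=n-2+f_3(G)+m$. Substituting into the inequality above yields
\[
2\bigl(n-2+f_3(G)+m\bigr)\geq 3f_3(G)+4m,
\]
which simplifies immediately to $f_3(G)\leq 2n-2m-4$. Multiplying by $3$ and using $\sum_{v\in V(G)}|\overline{R_v}|=3f_3(G)$ gives the second formulation $\sum_{v\in V(G)}|\overline{R_v}|\leq 6n-6m-12$.

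For the ``in particular'' clause $f_3(G)\neq 2n-5$, I would argue by parity. If $m\geq 1$, the bound gives $f_3(G)\leq 2n-2m-4\leq 2n-6<2n-5$, so equality cannot occur. If $m=0$, then every face of $G$ is a triangle, so $G$ is a plane triangulation and Euler's formula forces $f_3(G)=f(G)=2n-4\neq 2n-5$. In either case $f_3(G)\neq 2n-5$, as desired.

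There is no real obstacle: this is a direct face-counting argument via Euler's formula, and the only mildly subtle point is splitting the ``in particular'' claim into the cases $m\geq 1$ and $m=0$ to rule out the odd value $2n-5$ on parity grounds.
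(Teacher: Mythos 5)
Your argument is correct and essentially the same as the paper's: the paper combines $e(G)\leq 3n-m-6$ (itself a consequence of Euler's formula and the face-degree sum) with $2e(G)\geq 3f_3(G)+4m$, which is exactly your computation carried out in one step, and your case split $m=0$ versus $m\geq 1$ for the ``in particular'' clause is identical to theirs. The only nitpick is that your Euler step $e(G)=n-2+f(G)$ is an equality only for connected $G$; for disconnected plane graphs it weakens to $e(G)\leq n-2+f(G)$, which still points the right way, so the conclusion is unaffected.
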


\begin{proof}
    Since the maximum number of edges of a planar graph is $3n-6$, and there exist $m$ faces are not $3$-faces, we have $e(G)\leq 3n-m-6$. On the other hand, $2e(G)=\sum_{i=3}^{\infty} i{f_i(G)}\geq 3f_3(G)+4m.$ Combining the two inequalities, we have $f_3(G) \leq 2n-2m-4$. If $m=0$, then $f_3(G) = 2n-4$. If $m \geq 1$, then $f_3(G) \leq 2n-6$. So we always have $f_3(G) \neq 2n-5$.
\end{proof}

\begin{lemma}\label{lemma2}
    For a $3$-face-block $B$ with $\sum_{v \in B}|\overline{R_v}| \leq 3|V(B)|$, after performing the following two operations on $B$ and obtaining a new $3$-face-block $\Hat{B}$, the inequality still holds for $\Hat{B}$.
    \begin{enumerate}
        \item  Adding a new vertex $u$ to form a $3$-face with two vertices in $B$, where $|\overline{R_u}|=1$;
        \item  Adding two new vertices $\{u,w\}$ to form a $3$-face $F$ with a vertex in $B$, and each of $\{u,w\}$ is incident with at most one other $3$-face besides $F$.
    \end{enumerate}
\end{lemma}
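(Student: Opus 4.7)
The plan is to reformulate the invariant in face-counting terms and then verify it operation by operation. Because every 3-face of the plane graph that touches a 3-face-block $B$ has all three of its vertices inside $B$ (each edge of a 3-face is a 3-face edge, so its endpoints lie in the same block), we have $\sum_{v\in B}|\overline{R_v}|=3f_3(B)$, where $f_3(B)$ is the number of 3-faces whose vertex set lies in $B$. The hypothesis thus reads $f_3(B)\leq|V(B)|$, and it suffices to show that each operation increases $f_3$ by no more than the corresponding increase in $|V|$.

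Operation 1 is essentially bookkeeping. The new vertex $u$ has $|\overline{R_u}|=1$, so $u$ lies in exactly one 3-face, namely the one it forms with two vertices of $B$. All other 3-faces are untouched. Hence $f_3(\hat B)=f_3(B)+1$ while $|V(\hat B)|=|V(B)|+1$, so the invariant is preserved.

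For Operation 2, I would enumerate the new 3-faces by how they intersect $\{u,w\}$. Writing $c$ for the number of new 3-faces containing both $u$ and $w$, $a$ for those containing only $u$, and $b$ for those containing only $w$, the per-vertex cap gives $a+c\leq 2$ and $b+c\leq 2$, while $F$ forces $c\geq 1$ and planarity of the edge $uw$ forces $c\leq 2$. The target is $a+b+c\leq 2$. The case $c=2$ is immediate since then $a=b=0$. For $c=1$, I would argue that $a$ and $b$ cannot both equal $1$: if $u$ had an extra 3-face $T_u$ not containing $w$ and $w$ had an extra 3-face $T_w$ not containing $u$, then $T_u$ would sit across the edge $ux$ and $T_w$ across $wx$; I would show that this is precluded by reading ``each of $\{u,w\}$ is incident with at most one other 3-face besides $F$'' so that the two ``others'' at $u$ and $w$ are forced to coincide (both being the face on the other side of $uw$), leaving at most one additional 3-face. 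Then $f_3(\hat B)\leq f_3(B)+2\leq|V(B)|+2=|V(\hat B)|$.

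The hard part is precisely the subcase $c=1,\ a=b=1$ of Operation 2. A strict per-vertex reading only rules out $a,b\geq 2$, and would allow three new 3-faces, which is one more than the invariant affords. The proof thus hinges on pinning down that the ``other'' 3-face at $u$ and the ``other'' 3-face at $w$ are the same face (the unique candidate across the edge $uw$), either via the intended reading of the hypothesis or via a planarity argument on the local embedding around $F$. Once this is settled, combining with Operation 1 shows that $f_3(\hat B)\leq|V(\hat B)|$ after either operation, as claimed.
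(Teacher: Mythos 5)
Your treatment of Operation 1 is correct and is essentially the paper's argument (the sum grows by $3$ for the single new $3$-face; the possible destruction of the host face only helps the inequality). The problem is Operation 2, and the gap you flag --- the subcase $c=1$, $a=b=1$ --- is genuine and is not closed by your proposal; moreover, it cannot be closed by the planarity argument you gesture at, because the hypothesis as literally stated does not force the ``other'' $3$-face at $u$ and the ``other'' $3$-face at $w$ to coincide. A concrete counterexample to the lemma as written: let $B$ be the wheel with center $c$ and rim $v_1v_2v_3v_4v_5$ together with the chord $v_1v_3$ drawn outside the rim, so that $f_3(B)=6=|V(B)|$ (the hypothesis holds with equality) and $v_1v_3v_4v_5$ bounds a $4$-face. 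Insert $u,w$ into that $4$-face with edges $uv_3$, $uv_4$, $uw$, $wv_4$, $wv_5$; then $F=uwv_4$ is a $3$-face, $u$ lies only in $F$ and $uv_3v_4$, and $w$ lies only in $F$ and $wv_4v_5$, so the stated hypothesis of Operation 2 is satisfied, yet $f_3(\hat{B})=9>8=|V(\hat{B})|$, i.e.\ $\sum_{v\in\hat{B}}|\overline{R_v}|=27>24=3|V(\hat{B})|$.

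For what it is worth, the paper's own proof of Operation 2 simply asserts that the sum increases by at most $6$, which silently assumes your bad subcase does not occur; the lemma is only ever applied inside $(C_3\text{-}C_4)$-free graphs, where the offending configuration is presumably excluded, but neither the lemma statement nor its proof records that hypothesis. So your diagnosis is accurate: a correct version needs either the stronger hypothesis that at most one additional $3$-face meets $\{u,w\}$ in total, or an appeal to $(C_3\text{-}C_4)$-freeness at each point of use. As submitted, however, your proof of Operation 2 is incomplete at exactly the step you identify, and that step is not a formality.
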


\begin{proof}
    For operation $1$: For the new $3$-face-block $\Hat{B}$, since we have $|\overline{R_u}|=1$, then $\sum_{v \in \Hat{B}}|\overline{R_v}| = \sum_{v \in B}|\overline{R_v}|+3 \leq 3|V(B)|+3 = 3|V(\Hat{B})|$.

    For operation $2$: For the new $3$-face-block $\Hat{B}$, since each of $\{u,w\}$ is incident with at most one other $3$-face besides $F$. We have $\sum_{v \in \Hat{B}}|\overline{R_v}| \leq \sum_{v \in B}|\overline{R_v}|+6 \leq 3|V(B)|+6 = 3|V(\Hat{B})|$.
\end{proof}

\begin{lemma}\label{lemma3}
    For a $(C_3\text{-}C_4)$-free plane graph $G$ and a $3$-face-block $B$ of $G$, we have $\sum_{v \in B}|\overline{R_v}| \leq 3|V(B)|$ except the following four cases as shown in Figure~\ref{sepcial-cases}. In particular, there are two properties for the four special $3$-face-blocks.

    \begin{enumerate}
        \item For each $3$-face-block $B$, if it is incident with some vertex $v\notin V(B)$, then we have $|\overline{R_v}|=0$, and the $3$-face-bolck can only have one edge with the vertex $v$;
        \item For each $3$-face-block $B$, it cannot be incident with some $4$-face except the outerface in $B_4$. 
    \end{enumerate}
\end{lemma}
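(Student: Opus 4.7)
My plan is to parallel the argument of Lemma~\ref{face-block}, but with two modifications reflecting the stronger $(C_3\text{-}C_4)$-free condition. First, $(C_3\text{-}C_4)$-freeness forbids not only two vertex-disjoint $3$-faces joined by an edge, but \emph{any} $3$-face vertex-disjoint from any $C_4$ (whether a face or not) and joined by an edge. Second, a fan of two or more consecutive $3$-faces at $v$ already creates a $C_4$ through $v$ (for instance, $vv_iv_{i+1}v_{i+2}v$), so $R_v$ is typically brimming with $C_4$s, which makes external $3$-faces at vertices of $R_v$ much more restricted than in the $C_3\text{-}C_3$ setting. The first step is to establish an analogue of Lemma~\ref{partition}: for each partition $(k_1,\ldots,k_\ell)$ of $R_v$, I would enumerate the $C_4$s contained in $R_v$ and, assuming some $u\in V(R_v)$ carries an incident $3$-face $F\notin R_v$, use the requirement that $F$ meet every such $C_4$ to dramatically prune the list of admissible partitions.

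The second step is the core case analysis, organized by $|\overline{R_B}|:=\max\{|\overline{R_u}|:u\in V(B)\}$ in descending order, with further subcases for each admissible partition. For each case I would bound the number of $3$-faces of $B$ using the maximality of $|\overline{R_v}|$ together with planarity, combined with the $(C_3\text{-}C_4)$-free restrictions just derived; the conclusion is that either $\sum_{v\in B}|\overline{R_v}|\leq 3|V(B)|$ holds outright, or $B$ is forced to be one of $\mathbf{B}_1,\mathbf{B}_2,\mathbf{B}_3,\mathbf{B}_4$. Where convenient, I would reduce a larger block to a smaller admissible one by undoing an operation of type~$1$ or type~$2$ of Lemma~\ref{lemma2}, since both operations preserve the desired inequality.

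For the two supplementary properties, I would use the fact that each of the four special blocks contains a $C_4$. For Property~$1$, any vertex $v\notin V(B)$ adjacent to $B$ with $|\overline{R_v}|\geq 1$ yields a $3$-face vertex-disjoint from a suitable $C_4$ chosen inside $B$ and joined to it by the edge from $v$ to $B$, contradicting $(C_3\text{-}C_4)$-freeness; if $v$ had two edges to $V(B)$, the $C_4$ inside $B$ can instead be picked disjoint from both attachment points. For Property~$2$, a $4$-face $F'$ incident to $B$ (other than the permitted outerface of $\mathbf{B}_4$) is itself a $C_4$, and each special block contains a $3$-face vertex-disjoint from $V(F')$ and joined to it by an edge, producing a forbidden $C_3\text{-}C_4$. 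The main obstacle lies in the case analysis: the abundance of $C_4$s through $v$ forces the external $3$-face $F$ to intersect $R_v$ in very particular ways that must be enumerated exhaustively, and subtleties of the planar embedding ultimately determine which competing $3$-faces can coexist; careful bookkeeping of which vertices of $R_v$ must lie in $F$ together with planarity is what isolates $\mathbf{B}_1,\ldots,\mathbf{B}_4$ as the only extremal blocks.
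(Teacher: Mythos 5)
Your high-level strategy (restrict the admissible partitions of $R_v$ using $(C_3\text{-}C_4)$-freeness, then run a case analysis that either proves the bound or isolates the exceptional blocks, with Lemma~\ref{lemma2} used to reduce larger blocks) matches the spirit of the paper's proof, and your arguments for the two supplementary properties are fine (the paper dismisses them as trivial). However, there is a genuine structural gap in your plan. You propose to first prove an analogue of Lemma~\ref{partition} that prunes the admissible partitions and then organize the case analysis by $|\overline{R_B}|=\max\{|\overline{R_u}|:u\in V(B)\}$ in \emph{descending} order, as in the $C_3\text{-}C_3$ proof. That scheme relies on $|\overline{R_B}|$ being bounded by an absolute constant, which is true for $C_3\text{-}C_3$ (Lemma~\ref{partition} gives $|\overline{R_v}|\le 6$) but false here: partitions of the form $(1,1,\ldots,1)$ and $(4,1,1,\ldots,1)$ with arbitrarily many singleton parts are admissible in a $(C_3\text{-}C_4)$-free plane graph --- indeed the extremal graph for Theorem~\ref{thm2} is built exactly from a vertex with partition $(4,1,\ldots,1)$. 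Two singleton $3$-faces at $v$ share the vertex $v$ and create no $C_4$, so your pruning criterion (``$F$ must meet every $C_4$ in $R_v$'') does not eliminate them, and your descending enumeration of $|\overline{R_B}|$ never gets off the ground.

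The paper avoids this by stratifying instead on $t$, the number of parts of the partition of size at least $2$; the $(C_3\text{-}C_4)$-free condition forces $t\le 3$, so this parameter is genuinely bounded, and the unboundedly many singleton parts are then absorbed either by repeated application of the reduction operations of Lemma~\ref{lemma2} or by the global face-counting inequality of Lemma~\ref{lemma1} (if $m$ faces are not $3$-faces then $f_3\le 2n-2m-4$), which you do not invoke. To repair your proposal you would need to replace the descending induction on $|\overline{R_B}|$ by a bounded parameter such as $t$, and add a mechanism --- Lemma~\ref{lemma1} or something equivalent --- for handling blocks containing many singleton $3$-faces, where purely local bookkeeping of which $3$-faces can coexist does not terminate.
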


\begin{figure}[H]\label{sepcial-cases}
    \centering
    \includegraphics[width=0.9\linewidth]{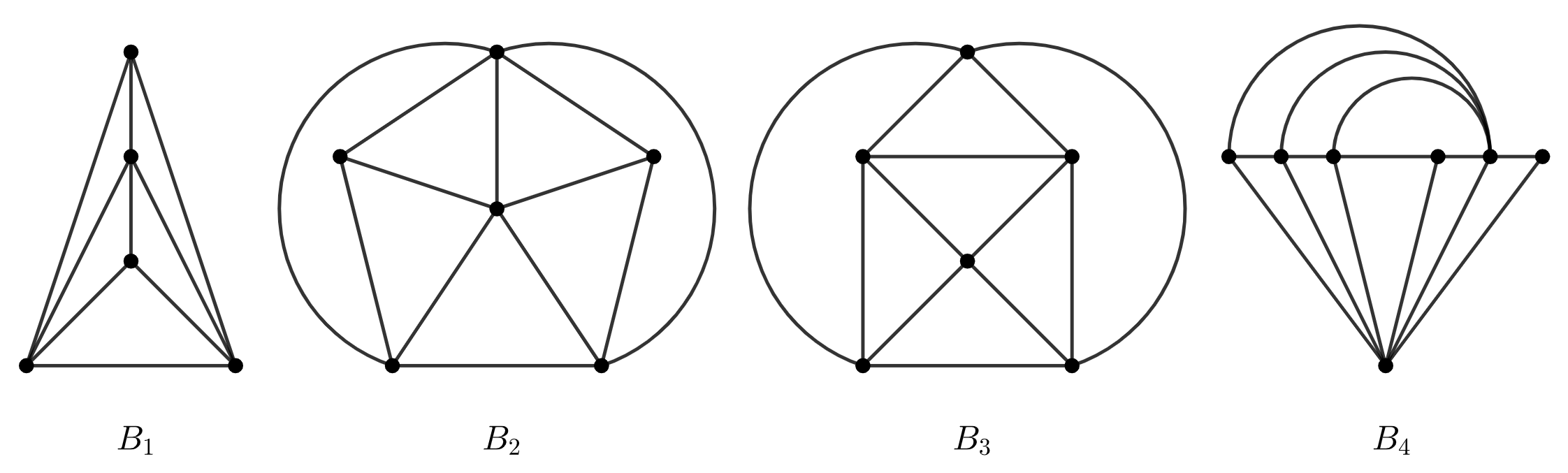}
    \caption{The $3$-face-blocks with $\sum_{v \in B}|\overline{R_v}| > 3|V(B)|$.}
    \label{fig:enter-label}
\end{figure}

\begin{proof}
    
    The two properties are trivial, since the graph $G$ is $(C_3\text{-}C_4)$-free. We give the proof of Lemma~\ref{lemma3} by giving the partition of $R_v$ for some vertex $v\in V(B)$. Let $(k_1,k_2,...,k_\ell)$ denote the partition of $R_v$. We use $t$ to denote the number of $k_i$ which is greater than $1$ for $i=1,2,...,\ell$. We could assume that $|\overline{R_v}|\geq 4$, otherwise we are done since $|\overline{R_v}|\leq 3$ for each $v\in V(B)$. If there exists some vertex $u$ satisfying $R_u \subseteq R_v$, then according to Lemma~\ref{R-v} we have $\sum_{u \in R_v}|\overline{R_u}| \leq 3|V(R_v)|$. So we could always assume that $R_u\not\subseteq R_v$ for some vertex $u$ in $R_v$. It implies that there is an edge $e$ not containing $v$ such that $F=e\cup \{u\}$ is a $3$-face in $G$.
    
    \begin{case}
        $t\geq4$.
    \end{case}
    
    Since $t \geq 4$, we could easily find a copy of $C_3\text{-}C_4$ since the $3$-face $F$ intersects at most three parts of the partition of $R_v$, a contradiction.
    
    \begin{case}
        $t=3$.
    \end{case}
    
    $F$ intersects each part of $1,2,3$ exactly once, there exists only one such $F$ since the graph is planar, we could also conclude that $\sum_{u \in R_v}|\overline{R_u}| \leq 3|V(R_v)|.$
    
    \begin{case}
        $t=2$.
    \end{case}
    
    We start by proving the case where the partition is $(2,2)$. If $F\subseteq R_v$, then we could assume that $v_1 \in F$, since $v_1$ must be incident with some vertices of $\{v_4,v_5,v_6\}$, then there exist at least $2$ faces are not $3$-faces. According to Lemma~\ref{lemma1}, we have $f_3(B)\leq 6<n=7$, which implies $\sum_{u \in R_v}|\overline{R_u}| \leq 3|V(R_v)|$. If $F\not\subseteq R_v$, then there is a vertex $v_0\in F$ but $v_0 \not\in R_v$. If $|\overline{R_{v_0}}|=1$, according to Lemma~\ref{lemma2}, $\sum_{u \in R_v}|\overline{R_u}| \leq 3|V(R_v)|$. If $|\overline{R_{v_0}}|\geq2$, then there are at least two $3$-faces which are incident with $v_0$ and at least $3$ vertices of $\{v_1,v_2,v_3,v_4,v_5,v_6\}$, so we could assume that $v_0$ is incident with two vertices of $\{v_1,v_2,v_3\}$, if the two vertices are $\{v_1,v_3\}$, then there is a copy of $C_3\text{-}C_4$. If the two vertices are $\{v_1,v_2\}$, there is also a copy of $C_3\text{-}C_4$, a contradiction. So for the partition $(2,2)$, we have $\sum_{u \in R_v}|\overline{R_u}| \leq 3|V(R_v)|$. For the partition $(2,2,1,1...)$, the part with only one $3$-face can be incident with at most one more other $3$-face since the graph is planar and $(C_3\text{-}C_4)$-free. Then according to Lemma~\ref{lemma2}, we have $\sum_{u \in R_v}|\overline{R_u}| \leq 3|V(R_v)|$.
    
    For $k_1=3,k_2=2$, we could request that $|\overline{R_{v_1}}| \geq 2$ and $|\overline{R_{v_4}}| \geq 2$. Otherwise, if $|\overline{R_{v_1}}| = 1$, then by Lemma~\ref{lemma2} and the result for the partition $(2,2,1,1,...)$, we have $\sum_{u \in R_v}|\overline{R_u}| \leq 3|V(R_v)|$. Since $|\overline{R_{v_1}}|\geq 2$, another $3$-face incident with $v_1$ must be $\{v_1,v_2,v_6\}$. Similarly, for $v_4$, there is a $3$-face incident with $v_4$ which is $\{v_3,v_4,v_6\}$. In this case, there are at least $2$ faces are not $3$-faces. According to Lemma~\ref{lemma1}, we have $f_3(B)\leq 8$, that is, $\sum_{u \in R_v}|\overline{R_u}| \leq 3|V(R_v)|$. As for $(3,2,1,1,...)$, we could deduce the same inequality as discussed in the former case.
    
    For $k_1=3,k_2=3$, we could request that $\mbox{min}\{\overline{|R_{v_1}}|,|\overline{R_{v_4}}|,|\overline{R_{v_5}}|,|\overline{R_{v_8}}|\} \geq 2$. The other $3$-face incident with $v_1$ must be $\{v_1,v_2\}$ and one vertex of $\{v_6,v_7\}$, no matter which vertex it is, no more $3$-face can be incident with $v_8$, a contradiction.
    
    For $k_1 \geq 4$, we could request that $|\overline{R_{v_1}}| \geq 2$ and $|\overline{R_{v_{k_1+1}}}| \geq 2$. However, no more $3$-face can be incident with $v_1$, since the graph is $(C_3\text{-}C_4)$-free.
    
    Thus, for $t=2$, we have $\sum_{u \in R_v}|\overline{R_u}| \leq 3|V(R_v)|$.
    
    \begin{case}
        $t=1$ and $R_v$ is not a wheel or a fan.
    \end{case}
    
    We start by proving the case where the partition is $(4,1)$. If $F\subseteq R_v$ and $|\overline{R_{v_6}}|=|\overline{R_{v_7}}|=1$, then there are at most three $3$-faces, we have $\sum_{u \in R_v}|\overline{R_u}| \leq 3|V(R_v)|$. If $|\overline{R_{v_6}}| \geq 2$, then $v_6$ must have edges with $\{v_1,v_2,v_3,v_4,v_5\}$, so there are at least $2$ faces are not $3$-faces. According to Lemma~\ref{lemma1}, we have $\sum_{u \in R_v}|\overline{R_u}| \leq 3|V(R_v)|$. If $F\not\subseteq R_v$, then there is a vertex $v_0\in F$ but $v_0 \not\in R_v$. If $|\overline{R_{v_0}}|=1$, according to Lemma~\ref{lemma2}, we have $\sum_{u \in R_v}|\overline{R_u}| \leq 3|V(R_v)|$. If $|\overline{R_{v_0}}|\geq2$, we use $\{F_1,F_2,...,F_p\}$ to denote those $3$-faces which are incident with $v_0$. We claim that each $F_i$ cannot be incident with $\{v_1,v_5\}$ for $i=1,2,...,p$, and $\{v_0,v_2,v_4\}$ cannot be a $3$-face, since the graph is $(C_3\text{-}C_4)$-free. Thus, $|\overline{R_{v_0}}|=2$, without loss of generality, we could assume that $\{v_0,v_2,v_3\}$ and $\{v_0,v_3,v_6\}$ are the two $3$-faces. However, there is a copy of $C_3\text{-}C_4$, which is $\{v,v_4,v_5\}$ and $\{v_0,v_2,v_3,v_6\}$ and the edge $vv_3$. As for the partition $(4,1,1,1...)$ with $m$ single $3$-faces. we could request that for each single $3$-face, it must be incident with two more $3$-face according to Lemma~\ref{lemma2}. If $F \subseteq R_v$, each single $3$-face must have edges with $\{v_1,v_2,v_3,v_4,v_5\}$, then there are at least $m+1$ faces are not $3$-faces. According to Lemma~\ref{lemma1}, we have $\sum_{u \in R_v}|\overline{R_u}| \leq 3|V(R_v)|$, since $n=2m+6$. If $F \not\subseteq R_v$, we have the same result as discussed above. So for $k_1=4$, the inequality holds.
    
    For $k_1=3$, we start by proving the case where the partition is $(3,1)$. If $F\subseteq R_v$ and $\overline{|R_{v_5}}|=|\overline{R_{v_6}}|=1$, then there are at most two $3$-faces, we have $\sum_{u \in R_v}|\overline{R_u}| \leq 3|V(R_v)|$. If $|\overline{R_{v_5}}| \geq 2$, then $v_5$ must have edges with $\{v_1,v_2,v_3,v_4\}$, so there are at least $2$ faces are not $3$-faces. According to Lemma~\ref{lemma1}, we have $\sum_{u \in R_v}|\overline{R_u}| \leq 3|V(R_v)|$. If $F\not\subseteq R_v$, then there is a vertex $v_0\in F$ but $v_0 \not\in R_v$. If $|\overline{R_{v_0}}|=1$, according to Lemma~\ref{lemma2}, we have $\sum_{u \in R_v}|\overline{R_u}| \leq 3|V(R_v)|$. If $|\overline{R_{v_0}}|\geq2$, we use $\{F_1,F_2,...F_p\}$ to denote those $3$-faces which are incident with $v_0$. Then each $F_i$ must contain one of $\{v_2,v_3\}$ for $i=1,2,...,p$, without loss of generality, we could assume that $v_0v_2$ is an edge. Since the graph is $(C_3\text{-}C_4)$-free, we could see that $F_1$ must be $\{v_0,v_2,v_3\}$, $F_2$ must be $v$, one of $\{v_2,v_3\}$ and one of $\{v_5,v_6\}$. By symmetry, we could assume that $F_2$ is $\{v_0,v_2,v_6\}$. Then $\{v_0,v_2,v_5\}$ cannot form a $3$-face, otherwise, $\{v_0,v_5,v_6\}$ , $\{v,v_1,v_2,v_3\}$ and the edge $vv_1$ form a copy of $C_3\text{-}C_4$, a contradiction. So $p=2$, and in this case, $|\overline{R_{v_1}}|=4$ and the maximum number of $|\overline{R_v}|$ is four. Thus, there is no more $3$-face incident with $v_2$, and since $v_1$ cannot be incident with $v_6$, there is also no more $3$-face incident with $v_2$. While $|\overline{R_{v_1}}|=3$, there is at most one $3$-face incident with $v_`$, then there is no more $3$-face incident with any of $\{v,v_1,v_2,v_3\}$, we have $\sum_{u \in R_v}|\overline{R_u}| \leq 3|V(R_v)|$. As for the partition $(3,1,1,1...)$ with $m$ single $3$-faces, we could request that for each single $3$-face, it must be incident with two more $3$-faces according to Lemma~\ref{lemma2}. If $F \subseteq R_v$, then each single $3$-face must have edges with $\{v_1,v_2,v_3,v_4\}$, then there are at least $m+1$ faces are not $3$-faces. According to Lemma~\ref{lemma1}, we have $\sum_{u \in R_v}|\overline{R_u}| \leq 3|V(R_v)|$, since $n=2m+5$. If $F \not\subseteq R_v$, we have the same result as discussed above. So for $k_1=3$, the inequality holds.
    
    For $k_1=2$, recall that $|\overline{R_v}|\geq 4$, we start by proving the case where the partition is $(2,1,1)$. If $F\subseteq R_v$ and $|\overline{R_{v_4}}|=|\overline{R_{v_5}}|=|\overline{R_{v_6}}|=|\overline{R_{v_7}}|=1$, then there are at most two $3$-faces, we have $\sum_{u \in R_v}|\overline{R_u}| \leq 3|V(R_v)|$. If $|\overline{R_{v_4}}|+|\overline{R_{v_5}}|\geq 3$ and $|\overline{R_{v_6}}|=|\overline{R_{v_7}}|=1$, then there are at least two faces are not $3$-faces. According to Lemma~\ref{lemma1}, we have $\sum_{u \in R_v}|\overline{R_u}| \leq 3|V(R_v)|$. If $|\overline{R_{v_4}}|+|\overline{R_{v_5}}|\geq 3$ and $|\overline{R_{v_6}}|+|\overline{R_{v_7}}|\geq 3$, there are at least three faces are not $3$-faces. According to Lemma~\ref{lemma1}, we could get the same result. If $F\not\subseteq R_v$, then there is a vertex $v_0\in F$ but $v_0 \not\in R_v$. If $|\overline{R_{v_0}}|=1$, according to Lemma~\ref{lemma2}, we have $\sum_{u \in R_v}|\overline{R_u}| \leq 3|V(R_v)|$. If $|\overline{R_{v_0}}|\geq2$, we use $\{F_1,F_2,...,F_p\}$ to denote those $3$-faces which are incident with $v_0$. Since each $F_i$ has to be incident with one of $\{v_1,v_2,v_3\}$, and $v_0$ cannot have any edge with $\{v_1,v_3\}$ meanwhile.
    We claim that each $F_i$ must contain $v_2$. Otherwise, we could assume that each $F_i$ contains $v_1$, since $|\overline{R_{v_0}}| \geq 2$. Then $F_1$ and $F_2$ form a $C_4$, together with $\{v,v_2,v_3\}$, they form a $C_3\text{-}C_4$, a contradiction. So, each $F_i$ must contain $v_2$, and $|\overline{R_{v_0}}|=2$. $F_1$ contains $\{v_0,v_2\}$ and one vertex of $\{v_4,v_5\}$, $F_2$ contains $\{v_0,v_2\}$ and one vertex of $\{v_6,v_7\}$. The vertex chosen form $\{v_4,v_5\}$ has been incident with two $3$-faces, since maximum $|\overline{R_v}|=4$, it can be incident with two more $3$-faces. Thus, there is no more $3$-face is incident with any vertex of $\{v,v_0,v_2,v_i\}$, where $v_i$ is the vertex chosen from $\{v_4,v_5\}$, then we have $\sum_{u \in R_v}|\overline{R_u}| \leq 3|V(R_v)|$. As for the partition $(2,1,1,1...)$ with $m$ single $3$-faces, we could request that for each single $3$-faces, it must be incident with two more $3$-faces according to the Lemma~\ref{lemma2}. If $F \subseteq R_v$, each single $3$-face must have edges with $\{v_1,v_2,v_3\}$, then there are at least $m+1$ faces are not $3$-faces. According to Lemma~\ref{lemma2}, we have $\sum_{u \in R_v}|\overline{R_u}| \leq 3|V(R_v)|$, since $n=2m+4$. If $F \not\subseteq R_v$, we have the same result as discussed above. So for $k_1=2$, the inequality holds.
    
    For $k_1\geq5$, since we have proved for the case when $k_1=4$, we have $\sum_{u \in R_v}|\overline{R_u}| \leq 3|V(R_v)|$, then according to Lemma~\ref{lemma3}, we could request that $|\overline{R_{v_1}}|\geq 2$ and $|\overline{R_{v_{k_1+1}}}|\geq 2$. However, since the graph is $(C_3\text{-}C_4)$-free, $\{v_1,v_3\}$ and $\{v_1,v_4\}$ cannot be an edge. So another $3$-face incident with $v_1$ must be $\{v_1,v_2,v_5\}$, however, there is still a copy of $C_3\text{-}C_4$, which is formed by $\{v_2,v_3,v_4,v_5\}$ and the single $3$-face, a contradiction.
    
    Then for $t=1$, we have $\sum_{u \in R_v}|\overline{R_u}| \leq 3|V(R_v)|$.
    
    \begin{case}
        $t=0$.
    \end{case}
    
    Assume that $|\overline{R_v}| = m \geq 4$, there are at least four single $3$-faces. If there exist two consecutive single $3$-faces have edges, we could relabel them as Figure~\ref{relabel} shows, and prove in the same way as proving the case when the partition is $(3,1,1,......)$. Thus, we have $\sum_{u \in R_v}|\overline{R_u}| \leq 3|V(R_v)|$.
    
    \begin{figure}[H]
        \centering
        \includegraphics[width=0.9\linewidth]{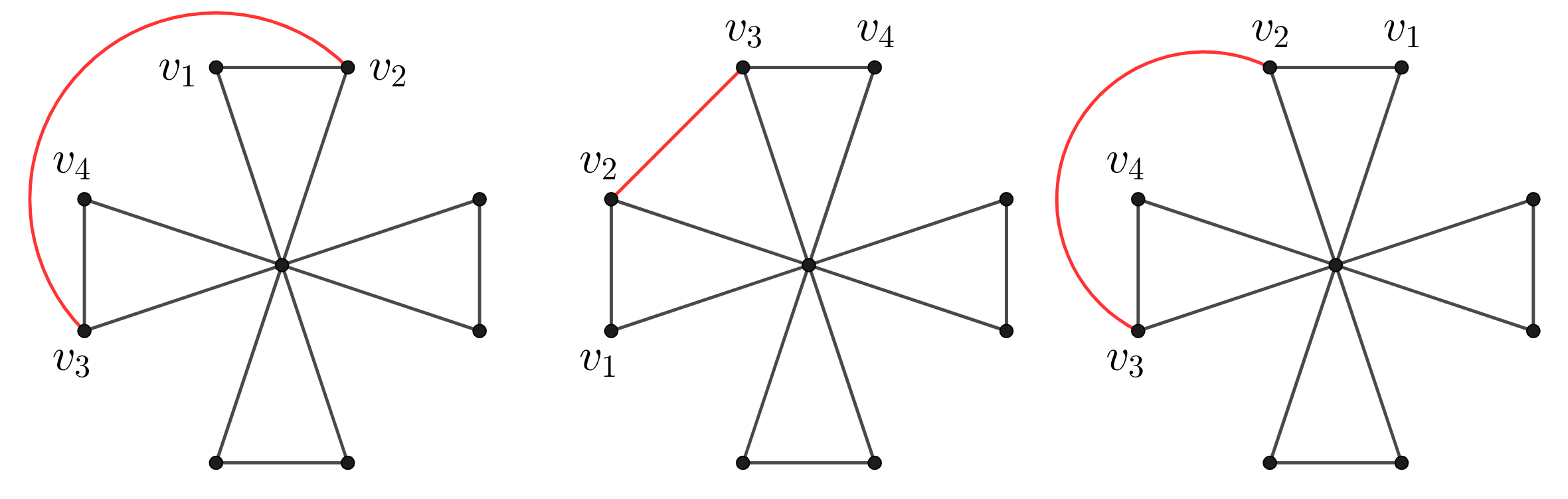}
        \caption{Relabel the vertices.}
        \label{relabel}
    \end{figure}
    
    Then we could assume that any two consecutive single $3$-faces have no edge between them. If there exists a $3$-face-block generated from the partition $(1,1,1,1,...)$ which has the property that $\sum_{u \in R_v}|\overline{R_u}| > 3|V(R_v)|$, then we choose the graph that has the least vertices. Then according to Lemma~\ref{lemma2}, each vertex in this $3$-face-block has the property that $|\overline{R_v}| \geq 2$, otherwise we could find a graph with fewer vertices which satisfies $\sum_{u \in R_v}|\overline{R_u}| > 3|V(R_v)|$. Recall the definition of $3$-face-block, let $w_0$ be a vertex that is incident with $(1,1,1,1,...)$. Let us consider the graph $R_v \cup w_0$. We claim that $w_0$ cannot have edges with $\{v_{2i-1},v_{2i}\}$ at the same time, otherwise $\{v_1,v_2...v_{2m}\}$ must have edges with $w_0$, since $|\overline{R_v}| \geq 2$. Then for the vertices incident with $R_v \cup w_0$, they must have edges with $w_0$, and $3$-faces incident with $w_1$ cannot contain any vertex in $R_v \cup w_0$ since the graph is $(C_3\text{-}C_4)$-free. Moerover, we could claim that there is no $\Theta_4$ in the graph, otherwise there must have triangle to conncect the $\Theta_4$ to the partition $(1,1,1,1,...)$. Hence we have $e_{3,3}=0$ and then $3f_3(G) = e_3$. According to Property~\ref{pro1} we have $3f_3(G) = e_3 \leq e$.
    
    Then for $t=0$, we have $\sum_{u \in R_v}|\overline{R_u}| \leq 3|V(R_v)|$.
    
    \begin{case}
         $t=1$ and $R_v$ is a fan.
    \end{case}
    
    When $|\overline{R_v}| = 6$, If $F\subseteq R_v$, we could see that $|\overline{R_{v_1}}|=|\overline{R_{v_7}}|=1$, since each of the pairs $\{v_1v_3\},\{v_1v_4\},\{v_2v_5\}$ cannot be an edge. Similarly, we could see that $|\overline{R_{v_2}}|=|\overline{R_{v_6}}|=1$, since each of the pairs $\{v_2v_4\},\{v_2v_5\},\{v_3v_6\}$ cannot be an edge. Thus, if $F\subseteq R_v$, there is at most one extra $3$-face: $\{v_3,v_4,v_5\}$, we have $\sum_{u \in R_v}|\overline{R_u}| \leq 3|V(R_v)|$. If $F\not\subseteq R_v$, then there is a vertex $v_0\in F$ but $v_0 \not\in R_v$. If $|\overline{R_{v_0}}|=1$, according to Lemma~\ref{lemma2}, $\sum_{u \in R_v}|\overline{R_u}| \leq 3|V(R_v)|$. If $|\overline{R_{v_0}}|\geq2$, with the same discussion, we could claim that $F_1=\{v,v_3,v_4\}$, $F_2=\{v,v_4,v_5\}$. However, we could easily find a copy of $C_3\text{-}C_4$, a contradiction. So we have $\sum_{u \in R_v}|\overline{R_u}| \leq 3|V(R_v)|$.
    
    For $|\overline{R_v}|\geq7$, we could easily check that $|\overline{R_{v_1}}|=|\overline{R_{v_{k_1+1}}}|=1$. According to Lemma~\ref{lemma2}, we have $\sum_{u \in R_v}|\overline{R_u}| \leq 3|V(R_v)|$.
    
    When $|\overline{R_v}|=5$, if $F\subseteq R_v$ and $|\overline{R_{v_1}}|=|\overline{R_{v_6}}|=1$, there are at most two $3$-faces which use vertices $\{v_2,v_3,v_4,v_5\}$, then we have $\sum_{u \in R_v}|\overline{R_u}| \leq 3|V(R_v)|$. If $F\subseteq R_v$ and $|\overline{R_{v_1}}|+|\overline{R_{v_6}}| \geq 3$, without loss of generality, we could assume that $|\overline{R_{v_1}}| \geq 2$. Since each of the pairs $\{v_1v_3\}$, $\{v_1v_4\}$ cannot be an edge, we could see that the only extra $3$-face incident with $v_1$ is $\{v_1,v_2,v_5\}$. Then if $\{v_3v_5\}$ is not an edge, we have $\sum_{u \in R_v}|\overline{R_u}| \leq 3|V(R_v)|$, if $\{v_3 v_5\}$ is an edge, we have $G \cong B_4$. By the way, we could easily check that $B_4$ cannot be incident with the vertex $u$ while $|\overline{R_u}| \geq 1$. If $F\not\subseteq R_v$, then there is a vertex $v_0\in F$ but $v_0 \not\in R_v$. If $|\overline{R_{v_0}}|=1$, according to Lemma~\ref{lemma2}, $\sum_{u \in R_v}|\overline{R_u}| \leq 3|V(R_v)|$. However, if $|\overline{R_{v_0}}| \geq 2$, we could see that $v$ can only be incident with $\{v_2,v_3,v_4,v_5\}$, since the graph is $(C_3\text{-}C_4)$-free, $F$ cannot be $\{v_0,v_2,v_3\} , \{v_0,v_2,v_4\}$, So $F_1=\{v_0,v_2,v_5\}$, Then $\{v_0,v_3,v_4\}$ cannot be a $3$-face, otherwise there is a copy of $C_3\text{-}C_4$, thus $|\overline{R_{v_0}}|=1$, a contradiction.
    
    When $|\overline{R_v}|=4$, if $F\subseteq R_v$ and $f_3(G) > n = 6$, according to the Lemma~\ref{lemma1}, $f_3(G)\neq 7$, so we have $f_3(G)=8$, which means that all the faces in the $3$-face-block are $3$-faces. However, if all faces in the $3$-face block are $3$-faces, the vertex $v$ must be incident with one more $3$-face, contrary to the assumption that the maximum number of $|\overline{R_v}|$ is $4$. So if $F\subseteq R_v$, we have $f_3(G) \leq n$, that is, $\sum_{u \in R_v}\overline{|R_u|} \leq 3|V(R_v)|$. If $F\not\subseteq R_v$, then there is a vertex $v_0\in F$ but $v_0 \not\in R_v$. If $|\overline{R_{v_0}}|=1$, according to Lemma~\ref{lemma2}, $\sum_{u \in R_v}|\overline{R_u}| \leq 3|V(R_v)|$. If $|\overline{R_{v_0}}|\geq2$, we claim that $v_0$ must have an edge with $v_3$. Otherwise, we could see that $v_0$ cannot have edges with $v_1$ and $v_2$ at the same time, and cannot have edges with $v_4$ and $v_5$ at the same time, which contradicts $|\overline{R_{v_0}}|\geq2$. It is easy to check that $F_1=\{v,v_2,v_3\}, F_2=\{v,v_3,v_4\}$, then there is no more extra $3$-face, so we also have $\sum_{u \in R_v}|\overline{R_u}| \leq 3|V(R_v)|$.
    
    Thus, for the fan case, we have $\sum_{u \in R_v}|\overline{R_u}| \leq 3|V(R_v)|$ or $G \cong B_4$.
    
    \begin{case}
        $t=1$ and $R_v$ is a wheel.
    \end{case}
    
    When $|\overline{R_v}| \geq 6$, for any two vertices $v_i$ and $v_j$, meanwhile, we assume that $i>j$. If $i-j \equiv 2\ \text{or}\  3 \pmod{k_1+1}$, then $v_i$ and $v_j$ cannot form an edge since the graph is $(C_3\text{-}C_4)$-free.
    
    When $|\overline{R_v}|=5$, if $F\subseteq R_v$, we have $G \cong B_2$. If $F\not\subseteq R_v$, we could easily find a copy of $C_3\text{-}C_4$ in the graph.
    
    When $|\overline{R_v}|=4$, if $F\subseteq R_v$, we have $G \cong B_1$. If $F\not\subseteq R_v$, we have the following two cases: the induced subgraph of $v$ and its neighbors is $B_1$ or $W_4$.
    
    For the former case, we label the vertices as Figure~\ref{relabel2} shows.
    
    \begin{figure}[H]
        \centering
        \includegraphics[width=0.4\linewidth]{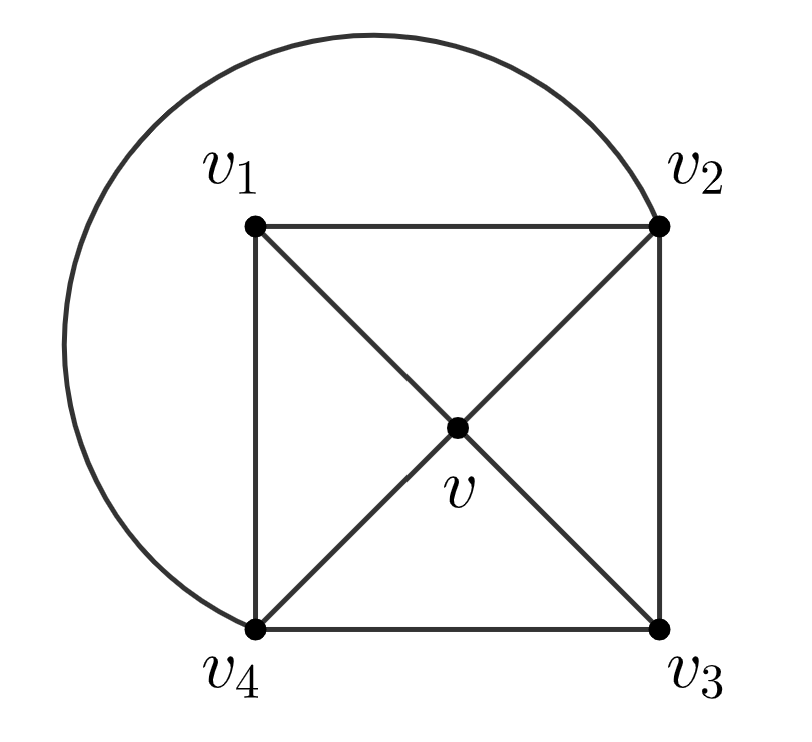}
        \caption{Label the graph of $B_1$.}
        \label{relabel2}
    \end{figure}
    
    If $|\overline{R_{v_0}}|=1$, according to Lemma~\ref{lemma2}, $\sum_{u \in R_v}|\overline{R_v}| \leq 3|V(R_v)|$. If $|\overline{R_{v_0}}|\geq2$, without loss of generality, we could assume that $F_1=\{v_0,v_2,v_3\}$ and $F_2=\{v_0,v_3,v_4\}$, then we have $G \cong B_2$.
    
    For the latter case. If $|\overline{R_{v_0}}|=1$, according to Lemma~\ref{lemma2}, $\sum_{u \in R_v}|\overline{R_u}| \leq 3|V(R_v)|$. If $|\overline{R_{v_0}}|=3$, then we have $G \cong B_3$. If $|\overline{R_{v_0}}|=2$, then there are $6$ vertices and six $3$-faces, so if we have other vertices $w \in R_v$, we could request that $|\overline{R_w}|\geq2$, however, in this case, we could easily find a copy of $C_3\text{-}C_4$. 
    
    Then for the wheel case, we have $\sum_{u \in R_v}|\overline{R_u}| \leq 3|V(R_v)|$ or the graph $G \cong \{B_1,B_2.B_3\}$.
    
\end{proof}

Now we begin to prove Theorem~\ref{thm2}.

We use $|R_v^0|$ to denote the number of vertices which satisfy $|\overline{R_v}|=0$. A $3$-face-block is called a \emph{good-block} if $\sum_{u \in R_v}|\overline{R_u}| \leq 3|V(R_v)|$, note that a vertex $v$ with $|\overline{R_v}|=0$ is also a good-block. Otherwise, we call the block as \emph{bad-block}. As we discussed above, there are only four bad-blocks, which are $\{ B_1,B_2,B_3,B_4\}$. Since the graph is connected, and according to the Lemma~\ref{lemma3}, each bad-block should be incident with a vertex $v_0$ with $|\overline{R_{v_0}}|=0$. Since $B_1$ has $5$ vertices and six $3$-faces while the outerface is a $3$-face, when $B_1$ is incident with a vertex $v_0$ with $|\overline{R_{v_0}}|=0$, the block will reduce one $3$-face, then the block satisfies $\sum_{u \in R_v}|\overline{R_u}| \leq 3|V(R_v)|$. So when the graph is connected, bad blocks only can be one of $\{ B_2,B_3,B_4\}$.

We assume that the number of copies of $B_2$ and $B_3$ in $G$ is $k_1$, the number of copies of $B_4$ in $G$ is $k_2$, and we assume that $|R_v^0|=m$. Since bad-block has the property that $f_3(B)=V(B)+1$, if $m \geq k_1+k_2$, then for the graph we have $f_3(G) \leq n$.

It follows that:

$$2e(G)=\sum_{i\geq3}if_i(G)\geq3f_3(G)+4(f(G)-f_3(G))=4f(G)-f_3(G)\geq4f(G)-n,$$

which implies that $f(G)\leq(2e(G)+n)/4$. By Euler’s formula, we have 

$$n-2=e(G)-f(G)\geq e(G)/2-n/4.$$

Hence, we have $e(G) \leq \lfloor 5n/2 \rfloor - 4$ when $n\geq8$.

So, we only need to discuss the case when $m < k_1+k_2$. We use $G_1$ to denote all good-blocks except single vertices. We assume that $|V(G_1)|=n_1$, let $G_2$ denote all the vertices that satisfy $|\overline{R_v}|=0$, let $G_3$ denote all the bad-blocks. According to Lemma~\ref{lemma3}, there is no edge between $G_1$ and $G_3$. As all the good-block, we have $\sum_{u \in R_v}|\overline{R_u}| \leq 3|V(R_v)|$, since all the vertices in $G_2$ have the property that $|\overline{R_v}|=0$, then for $G_1$ and $G_2$, we have $\sum_{u \in V(G_1 \cup G_2)}|\overline{R_u}| \leq 3|V(G_1)|$. So the total number of $3$-faces in $G_1$ and $G_2$ is at most $n_1$, then we have the following inequality.

$$e(G_1)+e(G_2)+e(G_1:G_2) \leq 2(m+n_1)-4+n_1/2.$$

If we regard each bad-block as a vertex, then according to Lemma~\ref{lemma3}, the vertices represent bad-blocks and $G_2$ form a bipartite graph $H$, and there is no $4$-cycle in the bipartite graph. So, all the faces in this bipartite graph are of size at least $6$. If we assume that there are $\Hat{e}$ edges in the bipartite graph, then we have

$$2\Hat{e}=\sum_{i\geq6}if_i(H) \geq 6f_6(H)+7(f-f_6(H))=7f(H)-f_6(H) \geq 6f(H),$$

which implies that $f \leq \Hat{e}/3$. By Euler’s formula, we have

$$v-2=\Hat{e}-f \geq 2\Hat{e}/3,$$

then we have

$$\Hat{e} \leq 3v/2-3,$$

since $v=m+k_1+k_2$, we have

$$e(G_3)+e(G_2:G_3) \leq 12k_1+14k_2+3(m+k_1+k_2)/2-3.$$

Then for the graph $G$, we have $|V(G)|=n_1+m+6k_1+7k_2$ and $e(G)=e(G_1)+e(G_2)+e(G_3)+e(G_1:G_2)+e(G_2:G_3)$. Then we have the following inequality:

$$E(G) \leq 2(m+n_1)-4+n_1/2+12k_1+14k_2+3(m+k_1+k_2)/2-3,$$

that is

$$E(G) \leq 5n_1/2+7m/2+27k_1/2+31k_2/2-7.$$

Since we have the assumption that $m < k_1+k_2$, we have

$$E(G) \leq 5n_1/2+5m/2+29k_1/2+33k_2/2-7 \leq \lfloor 5(n_1+m+6k_1+7k_2)/2 \rfloor - 4.$$

As for $n \leq 7$, we have discussed in Lemma~\ref{lemma3}.

If the graph is disconnected and has $r$ connected components, which are denoted as $\{H_1,H_2,...H_r\}$, $r\geq 2$. When $|V(H_i)| \geq 8$, we have $E(H_i) \leq \lfloor 5|V(H_i)|/2 \rfloor - 4$. When $|V(H_i)| \leq 7$, we have $E(H_i) \leq 2|V(H_i)|$. Then we have

$$E(G) \leq \sum_{i=1}^{r} \mbox{max}\{\lfloor 5|V(H_i)|/2 \rfloor - 4,2|V(H_i)|\}<\lfloor 5|V(G)|/2 \rfloor - 4.$$

We give the extremal graph as follows. The extremal graph is generated from the partition $(4,1,1,1...)$. Let $v_3$ have edges with all other vertices. If we assume that there are $t$ single $3$-faces, $v=2t+6, e=5t=11$, then we could easily check that there is no $C_3\text{-}C_4$ in $G$. See Figure~\ref{extremal graph 2} for an example with $n=10$.

\begin{figure}[H]
    \centering
    \includegraphics[width=0.6\linewidth]{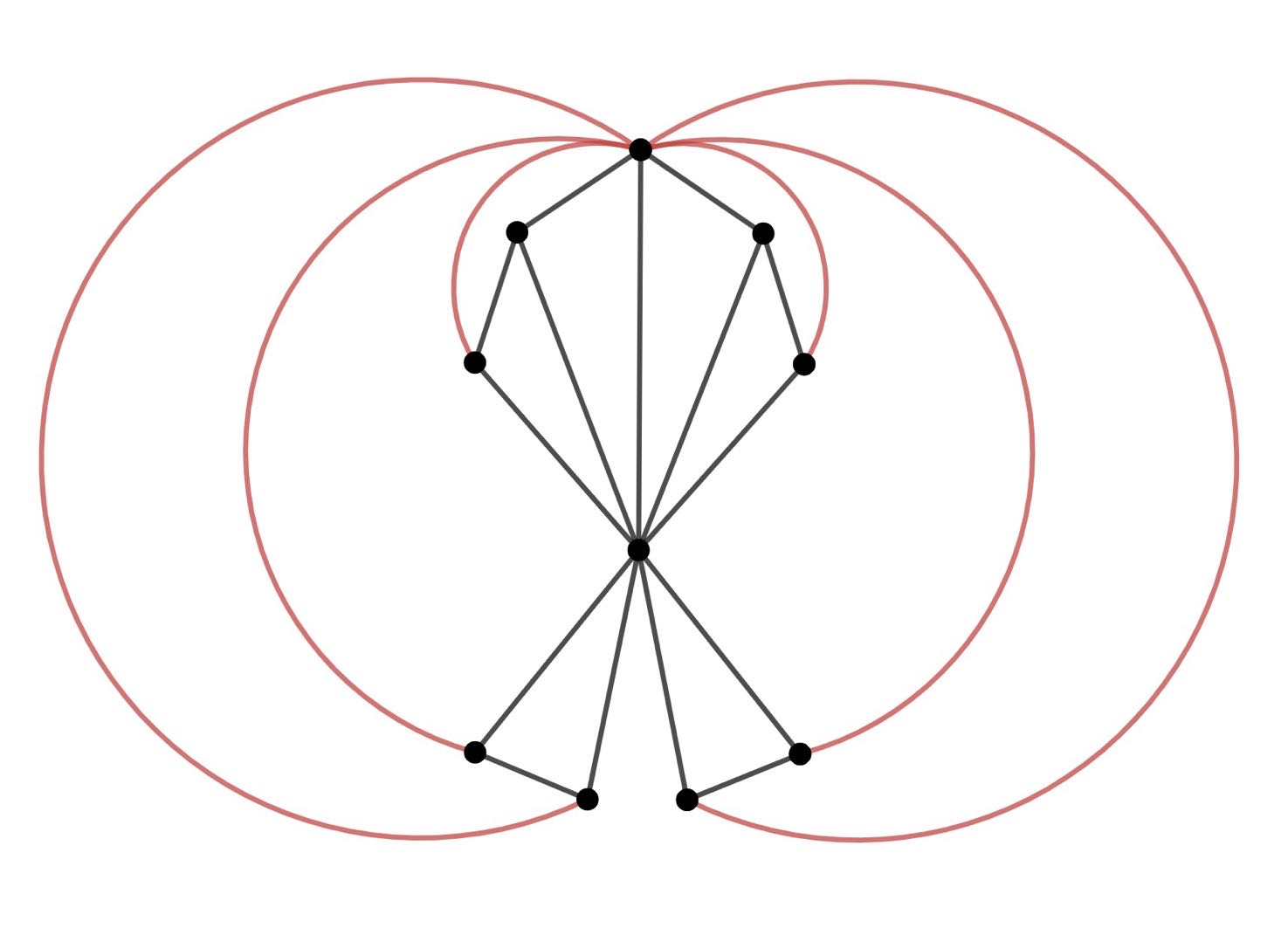}
    \caption{The extremal graph with $n=10$.}
    \label{extremal graph 2}
\end{figure}

\section{Acknowledgments}
The authors would like to thank the anonymous referees for their valuable comments, which greatly improved the presentation of the results. The authors would also like to thank Luyi Li and Tong Li for helpful discussions. This work was supported by the National Key R\&D Program of China (No. 2023YFA1009602) and the National Natural Science Foundation of China (Grant No. 12231018).

\bibliographystyle{abbrv}
\bibliography{main}
\end{document}